\newtheorem{theoreme}{Theorem}[section]
\newtheorem{lemme}[theoreme]{Lemma}
\newtheorem{proposition}[theoreme]{Proposition}
\newtheorem{question}[theoreme]{Question}
\newtheorem{remarque}[theoreme]{Remark}
\newtheorem{exemple}[theoreme]{Example}
\title{Basic remarks on Lagrangian submanifolds of hyperk\"ahler manifolds}
\author{Ren\'e Mboro}
\date{}
\begin{document}
\begin{abstract} This note presents basic restrictions on the topology ``general'' Lagrangian surfaces of hyper-K\"ahler $4$-folds and a remark on the interaction of a Lagrangian subvariety with a Lagrangian fibration of the associated hyper-K\"ahler variety.  
\end{abstract}
\maketitle
\section{Introduction}
Lagrangian submanifolds of irreducible symplectic manifolds are known to enjoy intersting properties. To name some of them, they are known to be projective (\cite[Proposition 2.1]{campana}) even when the symplectic manifold containing them is not and their deformations are unobstructed i.e. the corresponding Hilbert scheme is smooth at any point representing a smooth Lagrangian subvariety (see \cite[Section VI.6]{lehn}, see also \cite{voisin}). In this note, we present some additional properties of Lagrangian submanifolds.\\
\indent Among the most common examples of Lagrangian subvarieties of hyper-K\"ahler manifolds, we find curves on K3 surfaces and abelian varieties that appear, for example, as fibers of a Lagrangian fibration.\\
\indent In the first part of the note, we show that some of the features of these two model Lagrangian subvarieties are common to ``most'' of the other Lagrangian surfaces.\\
\indent In the case of curves on a $K3$, except for $\mathbb P^1$, the topological Euler characteristic is non-positive. The first result suggests that, essentially, the topological Euler characteristic of ``most'' Lagrangian subvarieties is of a given sign (determined by the dimension).  
\begin{proposition}\label{prop_euler_char} Let $S\subset Y$ be a Lagrangian surface in a hyper-K\"ahler $4$-fold whose deformations cover a dense open subset of $Y$. Then either \begin{itemize} 
\item two general surfaces parametrized by the same Hilbert scheme component as $S$, have empty intersection, in which case $\chi_{top}(S)=0$;
\item or there is a (possibly reducible) curve which is contained in every surface parametrized by the same Hilbert scheme component as $S$;
\item or $\chi_{top}(S)>0$
\end{itemize}
\end{proposition}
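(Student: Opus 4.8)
The plan is to translate the trichotomy into a statement about the self-intersection of $S$ and about the zero loci of holomorphic $1$-forms. The starting point is the isomorphism $N_{S/Y}\cong\Omega^1_S$ induced by the holomorphic symplectic form, which holds precisely because $S$ is Lagrangian. Since $S$ has codimension $2$, its self-intersection class is the top Chern class of the normal bundle, and using $c_2(\Omega^1_S)=c_2(T_S)$ for a rank-$2$ bundle one gets
\[
[S]^2=\int_S c_2(N_{S/Y})=\int_S c_2(\Omega^1_S)=\int_S c_2(T_S)=\chi_{top}(S).
\]
So the whole statement is really about $[S]^2$ and about how two general members of the Hilbert scheme component $B$ meet.

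I would then fix two general members $S,S'$ of the component and distinguish cases according to $\dim(S\cap S')$. If $S\cap S'=\varnothing$ then $[S]^2=0$, hence $\chi_{top}(S)=0$: this is the first alternative. If $S\cap S'$ is non-empty and finite, the intersection is proper (codimensions add up to $4=\dim Y$) and, by positivity of intersection of complex subvarieties, every local intersection multiplicity is strictly positive; therefore $[S]^2=\#(S\cap S')>0$ and $\chi_{top}(S)>0$, the third alternative. The only remaining possibility is that two general members meet along a curve, and the real work is to show that this forces a curve common to all members, i.e.\ the second alternative.

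To analyze the improper case I would pass to a neighborhood of a general member $S_0$ and use the holomorphic Lagrangian neighborhood theorem: a neighborhood of $S_0$ in $Y$ is symplectically identified with a neighborhood of the zero section of $T^*S_0$, under which the Lagrangian deformations of $S_0$ become the graphs of the holomorphic $1$-forms $\omega\in V:=T_{[S_0]}B\subseteq H^0(S_0,\Omega^1_{S_0})$. In these coordinates a nearby member $S_{b'}$ meets $S_0$ exactly along the zero locus $Z(\omega_{b'})$ of the corresponding form. The hypothesis that the deformations cover a dense open subset of $Y$ is precisely the statement that the evaluation map $\phi\colon V\otimes\mathcal{O}_{S_0}\to\Omega^1_{S_0}$ is surjective on a dense open set $U\subseteq S_0$, so that $\Omega^1_U$ is globally generated by $V$. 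A dimension count on the incidence variety $\{(u,\omega)\in U\times V:\omega(u)=0\}$, whose fibres over $U$ are the kernels of $\phi$ and hence of constant dimension $\dim V-2$, shows that for general $\omega\in V$ the locus $Z(\omega)\cap U$ is $0$-dimensional. Consequently the divisorial part of $Z(\omega)$ lies in the fixed degeneracy curve $Z:=S_0\setminus U$; as $Z$ has finitely many one-dimensional components, this divisorial part is a fixed reduced curve $D$ for general $\omega$, whence $D\subseteq S_0\cap S_{b'}\subseteq S_{b'}$ for general $b'$.

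The main obstacle is to upgrade this to a single curve lying on \emph{every} member of the component, rather than merely a curve on $S_0$ and its neighbors. Here I would argue globally with the universal family $e\colon\mathcal{S}\to Y$: since $e$ is dominant its general fibre has the expected dimension $\dim B-2$, and a genuinely \emph{moving} family of intersection curves would force the fibres of $e$ over a dense subset of $Y$ to jump above this dimension, which is impossible. Hence the intersection curves are swept out only over a proper subvariety of $Y$ and must assemble into a fixed curve contained in all members, giving the second alternative. The two delicate points to get right are the holomorphic version of the Lagrangian neighborhood theorem (used to identify $S_0\cap S_{b'}$ with $Z(\omega_{b'})$) and this last globalization ruling out a moving divisorial component; everything else is the self-intersection formula together with positivity of intersection.
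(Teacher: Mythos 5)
Your reduction to the self-intersection number and the first two branches of the trichotomy coincide with the paper's argument: $[S]^2=\int_S c_2(\Omega^1_S)=\chi_{top}(S)$, empty intersection gives $\chi_{top}(S)=0$, and a $0$-dimensional intersection gives $\chi_{top}(S)>0$ by positivity. The genuine gap is in your treatment of the curve case. You invoke a ``holomorphic Lagrangian neighborhood theorem'' to identify a neighborhood of a general member $S_0$ in $Y$ symplectically with a neighborhood of the zero section of $T^*S_0$, and hence the actual intersection $S_0\cap S_{b'}$ with the zero locus $Z(\omega_{b'})$ of a holomorphic $1$-form. No such theorem exists: Weinstein's neighborhood theorem is a $C^\infty$ statement, and holomorphic tubular neighborhoods generally fail to exist. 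Concretely, for a smooth fibre $E$ of a non-isotrivial elliptic K3 (a Lagrangian), every compact curve near the zero section of $T^*E\cong E\times\mathbb C$ is a fibre $E\times\{t\}$ of the projection, so all nearby compact curves have the same $j$-invariant, whereas the nearby deformations of $E$ in the K3 have varying $j$-invariant; thus the two neighborhoods are not even biholomorphic, let alone symplectomorphic. Fibres of Lagrangian fibrations of hyper-K\"ahler $4$-folds give analogous examples satisfying the hypotheses of the proposition. Indeed, the identification of actual intersections with zero loci of $1$-forms is exactly the passage from the infinitesimal to the actual picture that the paper flags as the real difficulty: it is established there only in the special situation of Theorem \ref{thm_alb_max}, and only by means of the full $T^1$-lifting machinery. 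So the central step of your curve case is unjustified, and in this generality false.

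The salvageable part is your closing global remark, which is where the paper's actual proof lives (Lemma \ref{lem_dim_transverse}), and which makes your local model unnecessary. The paper considers $e_S\colon [S']\mapsto [S\cap S']\in\mathrm{Div}^{\gamma_S}(S)$ and shows its image is a point: if it were positive-dimensional, one slices $\mathcal H(Y)$ down to a two-dimensional subvariety $M$ whose universal family $\mathcal S_M\to Y$ is generically finite, and then in either case ($\dim\mathrm{Im}(e_{S|M})=2$ or $1$) produces a one-dimensional family of members of $M$ through a general point $y\in Y$, contradicting generic finiteness. Your sentence about fibres of $e\colon\mathcal S\to Y$ ``jumping'' gestures at this mechanism but is an assertion, not an argument; to make it precise one can, e.g., use the incidence variety of triples $(b,b',y)$ with $y\in S_b\cap S_{b'}$: if the curve parts of the intersections moved so as to sweep a dense subset of $Y$, the component of this incidence dominating $\mathcal H(Y)^2$ with $1$-dimensional fibres would have general fibre dimension $2\dim\mathcal H(Y)-3$ over $Y$, exceeding the bound $2(\dim\mathcal H(Y)-2)$ coming from the fibres of $e$, which is absurd. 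Note also that even granting your local step, it only yields a curve $D(S_0)\subset S_0$ depending on $S_0$ and common to intersections with nearby members; upgrading this to a single curve contained in \emph{every} member of the component (via constancy of $e_S$ and irreducibility of $\mathcal H(Y)$) is precisely the content of the lemma, and your proposal leaves it at the level of a sketch.
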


Although Lagrangian surfaces whose deformations cover the associated hyper-K\"ahler $4$-fold can have intermediate Kodaira dimension (i.e. $0$ or $1$), we have the following result about the Albanese dimension.

\begin{theoreme}\label{thm_alb_max} Let $S\subset Y$ be a Lagrangian surface in a projective hyper-K\"ahler $4$-fold whose deformations cover a dense open subset of $Y$. Then $S$ has maximal Albanese dimension.
\end{theoreme}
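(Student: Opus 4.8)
\subsection*{Proof proposal}

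The plan is to produce, at a general point $s \in S$, two holomorphic $1$-forms on $S$ that are linearly independent in $\Omega^1_{S,s}$: this is exactly the statement that the differential of the Albanese map $a_S \colon S \to \mathrm{Alb}(S)$ has rank $2$ at $s$, hence that $a_S$ is generically finite onto its image, i.e.\ that $S$ has maximal Albanese dimension. Equivalently, I want to exhibit $\omega_1, \omega_2 \in H^0(S, \Omega^1_S)$ with $\omega_1 \wedge \omega_2 \neq 0$ in $H^0(S, K_S)$. The source of these $1$-forms is the symplectic form $\sigma \in H^0(Y, \Omega^2_Y)$. Since $S$ is Lagrangian, contraction with $\sigma$ induces an isomorphism $N_{S/Y} \cong \Omega^1_S$, sending a normal vector $n$ (with arbitrary lift $\tilde n \in T_Y|_S$) to the covector $\sigma(\tilde n, -)|_{T_S}$; this is well defined precisely because $\sigma|_{T_S \times T_S} = 0$, and it is an isomorphism by the Lagrangian dimension count. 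Under this identification the deformation space $H^0(S, N_{S/Y})$ of $S$ inside $Y$ becomes the space $H^0(S, \Omega^1_S)$ of global $1$-forms.

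First I would set up the universal family $\pi \colon \mathcal{S} \to B$ over the component $B$ of the Hilbert scheme containing $[S]$ (smooth at $[S]$ by unobstructedness, with tangent space $H^0(S, N_{S/Y})$), together with the evaluation morphism $u \colon \mathcal{S} \to Y$. The hypothesis that the deformations of $S$ cover a dense open subset of $Y$ means precisely that $u$ is dominant. At a point $(s, [S]) \in \mathcal{S}$ the differential $du$ contains $T_{S,s}$ in its image, and modulo $T_{S,s}$ it is given by the evaluation map $\mathrm{ev}_s \colon H^0(S, N_{S/Y}) \to N_{S/Y,s}$, so that $\mathrm{rank}(du)_{(s,[S])} = 2 + \mathrm{rank}(\mathrm{ev}_s)$. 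By generic smoothness in characteristic zero, $u$ is submersive at a general point, and there the corresponding $\mathrm{ev}_s$ is surjective onto the $2$-dimensional fibre $N_{S/Y,s}$. Transporting this through $N_{S/Y} \cong \Omega^1_S$, the evaluation $H^0(S, \Omega^1_S) \to \Omega^1_{S,s}$ is surjective at general $s$, which furnishes the two independent $1$-forms and hence $\omega_1 \wedge \omega_2 \neq 0$, as required.

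The main obstacle is that generic smoothness only controls $u$ over a dense open subset of $Y$, equivalently at a general point of the total space $\mathcal{S}$, hence at general points of a \emph{general} fibre; it does not by itself guarantee that $u$ is submersive at a general point of the \emph{particular} fibre $S$, which could a priori sit inside the non-submersive locus. The crux is therefore to upgrade the covering hypothesis to submersivity of $u$ at a general point of $S$ itself. One clean way to organise this: the rank of $\mathrm{ev}_s$ equals the generic rank of $d a_{S_b}$, and the locus where $a_{S_b}$ fails to be generically finite is closed in $B$ (the wedge map $\Lambda^2 H^0(S_b, \Omega^1_{S_b}) \to H^0(S_b, K_{S_b})$ vanishing is a closed condition, while $h^0(S_b, \Omega^1_{S_b})$ is constant over the smooth family by invariance of Hodge numbers). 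Thus maximal Albanese dimension holds over a dense open of $B$, and the remaining task is to see that the covering assumption places $[S]$ in that open locus, for instance because the family is already generically submersive onto $Y$ in a neighbourhood of $S$; absent that, the statement should be read for a general member of the covering family.
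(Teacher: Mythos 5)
Your reduction is sound as far as it goes: the identification $N_{S/Y}\simeq\Omega^1_S$, the computation $\mathrm{rank}(du)_{(s,[S_b])}=2+\mathrm{rank}(\mathrm{ev}_s)$, and generic smoothness applied to the dominant evaluation map $u\colon\mathcal S\to Y$ do yield, for $[S_b]$ in a dense open subset of the component, a point $s\in S_b$ at which $\mathrm{ev}_s$ is surjective, hence two global $1$-forms with $\omega_1\wedge\omega_2\neq 0$ and maximal Albanese dimension for $S_b$. The genuine gap is the one you flag yourself at the end: this proves the statement only for a \emph{general} member of the family, whereas the theorem is about the given $S$, and nothing in your argument prevents the entire fiber $S$ from lying in the closed non-submersive locus of $u$. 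Your attempted patch runs in the wrong direction: the vanishing of the wedge map $\Lambda^2 H^0(\Omega^1_{S_b})\to H^0(K_{S_b})$ being a closed condition shows that the locus of non-maximal Albanese dimension is \emph{closed} in the base, which is exactly compatible with $[S]$ sitting inside it; openness of the good locus does not place $[S]$ there. So as submitted the proposal establishes a strictly weaker statement.

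What closes the gap is a fact the paper itself recalls from \cite[Section 2.2]{lopes_pardini}: for compact K\"ahler surfaces the Albanese dimension is a topological invariant, hence constant along the smooth connected family over $\mathcal H(Y)$ (the paper invokes this to propagate Albanese dimension $1$ from $S$ to all members in its reductio). Combined with your generic-smoothness conclusion for the general member, this transfers maximal Albanese dimension back to $S$, and your completed argument would then be a genuinely different and much shorter proof than the paper's. The paper instead argues by contradiction: assuming Albanese dimension $1$, every section of $N_{S/Y}\simeq\Omega^1_S$ is pulled back from the Albanese base curve $B$ and vanishes on a union $Z$ of fibers; the technical core (Lemma \ref{lem_from_base} through Proposition \ref{prop_lift_section_A_n_to_B_n}, organized around the $T^1$-lifting principle) shows that deformations of $S$ fixing $Z$ are unobstructed, so the infinitesimally non-transverse intersection integrates to actual deformations $S'$ with $S\cap S'$ a curve moving in the base-point-free system $|\widetilde{alb_S}^*\omega_B|$, contradicting Lemma \ref{lem_dim_transverse}, which forces such intersection curves to be a single fixed curve. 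Without the topological invariance of the Albanese dimension (or some substitute performing the same transfer from the general member to $S$), your proof remains incomplete at precisely the point you identified.
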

The result presented in the second part is concerned with the interaction of a Lagrangian subvariety $X\subset Y$ with a Lagrangian fibration of $Y$.

\begin{proposition}\label{prop_img_lagr_fibr} Let $\pi\colon Y\rightarrow \mathbb P^n$ be a Lagrangian fibration of a hyper-k\"ahler variety $(Y,\omega)$ endowed with a section.\\
\indent Let $X\subset Y$ be a smooth Lagrangian subvariety which is generically contained in the smooth locus of $\pi$ and is not a fiber. Then either $\pi_{|X}$ is generically finite or the general fiber of $\bar{\pi}_{|X}:X\rightarrow \pi(X)$ is a union of abelian varieties.
\end{proposition}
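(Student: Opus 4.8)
The plan is to study, for a general point $t\in Z:=\pi(X)$, the Gauss map of the fibre $G:=\bar\pi_{|X}^{-1}(t)=X\cap Y_t$ inside the fibre $Y_t:=\pi^{-1}(t)$, and to show that the two Lagrangian conditions---the one on $X$ and the one on the fibres of $\pi$---force this Gauss map to be constant. The conclusion will then follow from the classical rigidity statement that an irreducible subvariety of a complex torus whose Gauss map is constant is a translate of a subtorus. Write $\dim Y=2n$, so that the fibres of $\pi$ and the Lagrangian $X$ both have dimension $n$, and put $r:=\dim Z$; the alternative ``$\pi_{|X}$ generically finite'' is exactly $r=n$, so I assume $r<n$. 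Because $X$ is smooth and generically contained in the smooth locus of $\pi$, for $t$ general the fibre $Y_t$ is a smooth abelian variety (the section exhibits it as such), $Z$ is smooth of dimension $r$ at $t$, and by generic smoothness $G$ is smooth of pure dimension $n-r$; at each $p\in G$ one then has $T_pG=\ker d(\pi_{|X})_p=T_pX\cap T_pY_t$ and $\operatorname{Im} d(\pi_{|X})_p=T_tZ$.

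The key step is a tangent-space computation built on the identifications attached to a Lagrangian fibration. Since $Y_t$ is Lagrangian, the normal bundle sequence gives $N_{Y_t/Y}\cong \pi^*T_{\mathbb P^n,t}$, while contraction with $\omega$ gives $N_{Y_t/Y}\cong \Omega^1_{Y_t}$; concretely, after these identifications $d\pi_p\colon T_pY\to T_{\mathbb P^n,t}$ becomes
\[
d\pi_p(v)=\omega(v,\,\cdot\,)|_{T_pY_t}\in (T_pY_t)^*.
\]
Now I exploit that $X$ is itself Lagrangian: for every $v\in T_pX$ and every $w\in T_pG\subseteq T_pX$ one has $\omega(v,w)=0$, so every element of $T_tZ=\operatorname{Im} d(\pi_{|X})_p$ annihilates $T_pG\subseteq T_pY_t$. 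Since $\dim T_tZ=r$ and $\dim T_pG=n-r=\dim\operatorname{Ann}_{T_pY_t}(T_tZ)$, this forces
\[
T_pG=\operatorname{Ann}_{T_pY_t}(T_tZ).
\]

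It remains to see that this subspace is independent of $p$. On the torus $Y_t$ both $\Omega^1_{Y_t}$ and $\pi^*T_{\mathbb P^n,t}$ are canonically trivial bundles, and an $\mathcal O_{Y_t}$-linear isomorphism between trivial bundles on the compact variety $Y_t$ is given by a matrix of global functions, hence is constant; therefore $T_tZ$ sits as a \emph{fixed} subspace of $T_{\mathbb P^n,t}\cong (T_0Y_t)^*=:V^*$, and its annihilator $T_pG\subseteq V:=T_0Y_t$ does not depend on $p\in G$. Thus the Gauss map $p\mapsto T_pG$ of $G$ in $Y_t$ is constant, and by the rigidity statement quoted above each irreducible component of $G$ is a translate of a subtorus, i.e.\ an abelian variety. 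Hence, when $\pi_{|X}$ is not generically finite, the general fibre of $\bar\pi_{|X}$ is a union of abelian varieties.

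The main obstacle I anticipate is not in any single computation but in setting up the identifications so that ``constant Gauss map'' is literally meaningful: one must check that the isomorphisms $N_{Y_t/Y}\cong\Omega^1_{Y_t}\cong \mathcal O_{Y_t}\otimes V^*$ and $N_{Y_t/Y}\cong \pi^*T_{\mathbb P^n,t}=\mathcal O_{Y_t}\otimes T_{\mathbb P^n,t}$ are the canonical, point-independent ones, so that $\operatorname{Ann}(T_tZ)$ really is a fixed element of $\mathrm{Gr}(n-r,V)$. The conceptual crux, by contrast, is the short observation that the Lagrangian property of $X$ pins down $T_pG$ as the annihilator of the $p$-independent subspace $T_tZ$; once this is in hand, the classical Gauss-map rigidity on complex tori finishes the proof.
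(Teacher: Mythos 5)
Your proposal is correct, but it proves the proposition by a genuinely different route than the paper. The paper invokes Hwang's result (Proposition \ref{prop_hwang}, \cite{hwang_base}) to replace $\pi^{-1}(U)$ by the unramified cover $f:T^*U\rightarrow \pi^{-1}(U)$ carrying the canonical symplectic form, pulls $X$ back to a Lagrangian $Z^\circ\subset T^*U$, and computes in canonical cotangent coordinates that the tangent space of a fiber $Z^\circ_u$ is everywhere $\mathrm{Span}(\frac{\partial}{\partial \xi_i},\,k+1\leq i\leq n)$; note that this coordinate computation ($0=\omega_{can}(v,\frac{\partial}{\partial z_i})=a_i$) is exactly your annihilator identity $T_pG=\operatorname{Ann}_{T_pY_t}(T_tZ)$, so the core symplectic linear algebra is shared. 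The two proofs diverge in how $p$-independence of this subspace is obtained and how the torus conclusion is drawn: the paper gets constancy for free from the flat affine structure of $T^*U$ and then concludes that the compact fibers of $\pi_{|X}$, receiving surjective unramified maps from affine spaces, are complex tori; you instead stay on the compact fiber $Y_t$, observe that the $\omega$-contraction $\pi^*T_{\mathbb P^n,t}\simeq N_{Y_t/Y}\simeq \Omega^1_{Y_t}$ is an $\mathcal O_{Y_t}$-linear isomorphism of trivial bundles and hence given by a constant matrix, and finish with classical Gauss-map rigidity in complex tori. What each buys: your argument is more self-contained, avoiding Hwang's theorem altogether, and uses the section only to fix an origin (smooth fibers of Lagrangian fibrations are complex tori in any case, so your proof in effect weakens the role of the section to the ``abelian variety'' normalization, which anyway requires projectivity of $Y$); the paper's route, at the cost of citing Hwang, yields slightly more geometry, namely that the full preimage $Z^\circ_u$ is a finite union of parallel affine subspaces uniformly over all of $U$, not just torus-ness of the general fiber. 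Two small points you should make explicit, though they match the paper's own level of care: your choice of a general $t\in\pi(X)$ with $Y_t$ smooth presupposes $\pi(X)\cap U\neq\emptyset$, which is the same reading of ``generically contained in the smooth locus of $\pi$'' that the paper adopts when it writes ``Assume $X\cap\pi^{-1}(U)\neq\emptyset$''; and your appeal to generic smoothness for $G=\bar\pi_{|X}^{-1}(t)$ is what legitimizes $\operatorname{Im}\,d(\pi_{|X})_p=T_tZ$ at every $p\in G$, which your dimension count needs.
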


\section{Topology of Lagrangian surfaces}
Let $S\subset Y$ be a smooth Lagrangian surface of a hyper-K\"ahler $4$-fold. We have the classical isomorphism $\Omega_S\simeq N_{S/Y}$ which suggests that there is a deep interplay between the deformation theory of $S$ inside $Y$ and its topology -as already illustrated by the elementary remark (using that the deformations of $S$ are unobstructed) that if $S$ does not deform in $Y$ then $\pi_1(S)$ is finite-.\\
\indent Let us denote by $\mathcal H(Y)$ a dense open subset of the Hilbert scheme component containing $[S]$, parametrizing only smooth surfaces. We have the following:

\begin{lemme}\label{lem_dim_transverse} Assume that for general $[S],[S']\in \mathcal H(Y)$, $S\cap S'$ is a curve. Let us consider, for $[S]\in \mathcal H(Y)$, the map $e_{S}:\mathcal H(Y)\backslash \{[S]\}\rightarrow {\rm Div}^{\gamma_{S}}(S)$, $[S']\mapsto [S'\cap S]$ to the space of effective divisors whose class in $NS(S)$ is $\gamma_{S}=[S\cap S']$. Then for any $[S]\in \mathcal H(Y)$ the image of $e_S$ is a point. The latter is associated to the (possibly reducible) curve contained in every member of $\mathcal H(Y)$. 
\end{lemme}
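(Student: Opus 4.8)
The plan is to build an incidence correspondence between $S$ and the family $\mathcal H(Y)$, compute its dimension fibrewise over each of the two factors, and compare: the comparison will force the union of all the curves $S\cap S'$ to lie inside a \emph{fixed} proper subcurve of $S$, after which constancy of $e_S$ follows formally. Throughout write $d=\dim \mathcal H(Y)$; since $N_{S/Y}\simeq \Omega_S$ and the deformations are unobstructed, $\mathcal H(Y)$ is smooth of dimension $d=h^0(S,\Omega_S)$, and being a dense open subset of a single Hilbert scheme component it is irreducible. I set
$$
I=\overline{\{(p,[S'])\in S\times \mathcal H(Y):\ [S']\ \text{general},\ p\in S\cap S'\}},
$$
and use the two projections $\mathrm{pr}_1\colon I\to S$ and $\mathrm{pr}_2\colon I\to \mathcal H(Y)$.

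First I would compute $\dim I$ through $\mathrm{pr}_2$. By hypothesis a general $S'$ meets $S$ in a curve, so $\mathrm{pr}_2$ is dominant with general fibre $S\cap S'$ of dimension $1$; hence the relevant component of $I$ has dimension $d+1$. Next I would bound $\dim I$ through $\mathrm{pr}_1$. The fibre of $\mathrm{pr}_1$ over a point $p$ is contained in $\{[S']:\ p\in S'\}$, the locus of members of $\mathcal H(Y)$ through $p$. Here the covering hypothesis enters: the total space of the universal family has dimension $d+2$ and dominates $Y$, so the members through a general point of $Y$ form a family of dimension $d-2$. As a general $p\in S$ is a general point of $Y$ (the locus where the evaluation map has jumping fibre dimension is a proper closed subset of $Y$, meeting $S$ in a proper closed subset), the general fibre of $\mathrm{pr}_1$ has dimension at most $d-2$. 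If $\mathrm{pr}_1$ were dominant its general fibre would have dimension $\dim I-\dim S=(d+1)-2=d-1>d-2$, a contradiction. Hence $\mathrm{pr}_1$ is not dominant and its image $Z=\mathrm{pr}_1(I)\subsetneq S$ is a proper closed subvariety, of dimension at most $1$.

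The conclusion is then formal. For every general $[S']$ the curve $S\cap S'$ is contained in $Z$ and is of pure dimension $1$, so it is a union of one-dimensional irreducible components of the \emph{fixed} curve $Z$. As $[S']$ varies over the connected base $\mathcal H(Y)$, the divisor $e_S([S'])=[S\cap S']$ takes values among the finitely many effective combinations of components of $Z$ lying in the fixed class $\gamma_S$; a locally constant map from a connected space to a discrete set is constant, so $e_S\equiv [D]$ for a single divisor $D$. Finally, for every $[S'']\in \mathcal H(Y)$ one has $S\cap S''=D$, whence $D\subset S''$; thus $D$ is contained in every member of $\mathcal H(Y)$ and is the curve claimed in the statement.

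I expect the main obstacle to be the rigorous bookkeeping in the two dimension counts: isolating the component of $I$ that dominates $\mathcal H(Y)$, checking that a general $p\in S$ is genuinely general in $Y$ so that the covering estimate $d-2$ applies, and verifying that $[S']\mapsto [S\cap S']$ is a well-defined morphism to $\mathrm{Div}^{\gamma_S}(S)$ landing in a single class (so that the final local-constancy argument is legitimate). Once these points are secured, the numerical inequality $d-1>d-2$ does all the real work.
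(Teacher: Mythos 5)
Your incidence count is a genuinely different route from the paper's: the paper never fixes one surface, but instead slices $\mathcal H(Y)$ down to a two-dimensional subfamily $M$ whose evaluation map $f:\mathcal S_M\to Y$ is generically finite, and then, in two cases according to whether ${\rm dim}({\rm Im}(e_{S|M}))$ is $1$ or $2$, produces a one-parameter family of members of $M$ through a general point of $Y$, contradicting generic finiteness. Your version avoids the slicing and the case division, but it has one genuine gap, located exactly where the two arguments part ways: because you fix $S$, you need a general point of the fixed surface $S$ to behave like a general point of $Y$, and your parenthetical justification of this is a non sequitur. The jumping locus $W\subset Y$, where the fibres of the evaluation map $f:\mathcal S\rightarrow Y$ have dimension $>d-2$, is indeed contained in a proper closed subset of $Y$; but $S$ is itself a proper closed subset of the fourfold $Y$, and nothing you have said prevents $S\subseteq W$. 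In that case every point of $S$ lies on a family of members of dimension $>d-2$, and the inequality $d-1>d-2$ has no force. This is a real phenomenon for members of covering families: for the $3$-dimensional covering family of lines in $\mathbb P^3$ meeting a fixed line $L$, the general point of $\mathbb P^3$ lies on a $1$-parameter family of members ($=d-2$), while every point of $L$ lies on a $2$-parameter family, so the jumping locus is exactly $L$ --- which is itself a member of the family. Since your $S$ is a member of $\mathcal H(Y)$, this is precisely the configuration you must rule out, and the covering hypothesis alone does not do it pointwise.

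The gap is reparable, with two insertions. First, the locus of $[S']\in\mathcal H(Y)$ with $S'\subseteq \overline{W}$ is closed in $\mathcal H(Y)$; were it all of $\mathcal H(Y)$, the union of the members would lie in the proper closed subset $\overline{W}$, contradicting the hypothesis that the family covers a dense subset of $Y$. Hence for \emph{general} $[S]$ one has $S\not\subseteq \overline{W}$, so $S\cap \overline{W}$ is a proper closed subset of $S$ and your bound ${\rm dim}\,{\rm pr}_1^{-1}(p)\leq d-2$ holds at a general $p\in S$; your count then goes through. But this proves constancy of $e_S$ only for general $[S]$, while the lemma asserts it for every $[S]$; so you still need the propagation step which is the paper's opening move: once $e_S\equiv[D]$ for one $S$ (your finite-image-plus-connectedness argument, or simply irreducibility of $\mathcal H(Y)$), for arbitrary members $S_0,S''$ one has $D\subseteq S_0\cap S''$ and $[S_0\cap S'']=\gamma_{S_0}=[D]$ in $NS(S_0)$, so the effective divisor $S_0\cap S''-D$ has trivial class and vanishes, giving $e_{S_0}\equiv[D]$ as well; your closing paragraph contains the germ of this but extracts only the common curve. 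Two smaller points to secure: run the dimension counts component by component on $I$ --- only the $(d+1)$-dimensional components (those with $1$-dimensional generic fibre over $\mathcal H(Y)$) carry the curves and need proper ${\rm pr}_1$-image, whereas a $d$-dimensional component may perfectly well dominate $S$ with $(d-2)$-dimensional general fibres and no contradiction --- and your finiteness claim for divisors supported on $Z$ in the class $\gamma_S$ is correct but deserves its one-line reason: pairing $\sum a_iZ_i=\gamma_S$ with an ample class bounds each coefficient $a_i$. With these insertions your argument is a complete, and somewhat shorter, alternative to the paper's proof.
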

\begin{proof} Note first that if $e_S$ has finite ($0$-dimensional) image for a surface $S$ the same is true for any other $[S']\in \mathcal H(Y)$.\\ 
\indent Indeed, as $\mathcal H(Y)$ is irreducible, $e_S(\mathcal H(Y)\backslash \{[S]\})$ consists of a point $[C]\in Div^{\gamma_S}(S)$. Take another $[S']\in \mathcal H(Y)$. By definition, $C\subset S'$ and $[C]=\gamma_{S'}$ in $NS(S')$. For any other $[S'']\in \mathcal H(Y)$, as $C= S''\cap S$, $C\subset S''\cap S'$. Since $[S''\cap S']=\gamma_{S'}=[C]$ in $NS(S')$, we get $C=S''\cap S'$ i.e. ${\rm Im}(e_{S'})=\{[C]\}$.\\
\indent So we just have to prove that there is no surface for which $e_S$ has positive dimensional image.\\

\indent So assume there is a surface $S$ for which ${\rm Im}(e_S)$ is positive dimensional. Then by what we have just seen, ${\rm Im}(e_{S'})$ is positive dimensional for any $[S']\in\mathcal H(Y)$. Taking general hyperplanes sections of $\mathcal H(Y)$, we can find a closed (irreducible) subvariety  $M\subset \mathcal H(Y)$ such that $f:\mathcal S_M\rightarrow Y$, where $\mathcal S_M$ is the pullback of the universal surface on $M$, is generically finite (dominant) and the restriction $e_{S|M}$ of $e_{S}$ to $M$ has positive dimensional image for any $[S]\in M$. In particular ${\rm dim}(M)=2$.\\

\indent If ${\rm dim}({\rm Im}(e_{S|M}))=2$ for a surface $[S]\in M$. Then the general (non-empty) fiber of $e_{S|M}$ is $0$-dimensional. For a general $[S']\in M$, denoting $C=e_{S|M}(S')$, we have $C=S\cap S'\in {\rm Im}(e_{S'|M})\subset Div^{\gamma_{S'}}(S')$. Any other $[S'']\in e_{S'|M}^{-1}([C])$ is also in $e_{S|M}^{-1}([C])$ (as $C\subset S''\cap S$ and $[C]=[S''\cap S]$ in $NS(S)$) so there are finitely of them i.e. $e_{S'|M}^{-1}([C])$ is $0$-dimensional. So 
the general fiber of $e_{S'|M}$ is also $0$-dimensional, in other words ${\rm dim}({\rm Im}(e_{S'}))=2$ for the general $[S']\in M$.\\
\indent Then the curves $S'\cap S$ cover a dense open subset of $S$ and as the deformations of $S$ parametrized by $M$ cover a dense open subset of $Y$, those curves cover a dense open subset of $Y$. Denoting $\mathcal C$ the universal curve over ${\rm Div}^{\gamma_{S}}(S)$, the generic fiber of $g_{S}:e_{S|M}^*\mathcal C\rightarrow S$ has dimension $1$. Now take a general point $y\in Y$ and a general surface $[S]\in M$ passing through $y$. Then there is a $1$-dimensional family of divisors of $S$ of the form $S'\cap S$, with $[S']\in M$ passing through $y$. So $f$ is not generically finite.\\

\indent We are left with the case when ${\rm dim}({\rm Im}(e_{S|M}))=1$ for the general surface $[S]\in M$. In this case, the curves $S'\cap S$ cover again $S$. As the deformations of $S$ parametrized by $M$ cover $Y$, those curves cover $Y$. Let $y\in Y$ be a general point and $C_y$ a curve of the form $S_1\cap S_2$ containing $y$. As the general fiber of $e_{S_1|M}$ is $1$-dimensional, there is a $1$-dimensional family $[S_t]\in M$ such that $S_1\cap S_t=C_y\ni y$. So $f$ is not generically finite.
\end{proof}

\begin{proof}[Proof of Proposition \ref{prop_euler_char}]
For two surfaces $[S_1],[S_2]$ in $\mathcal H(Y)$, we have $$\begin{aligned} \int_Y[S_1]\cdot[S_2]&=\int_Y[S]\cdot[S]=\int_Si^*i_*(1)= \int_Sc_{top}(N_{S/Y})=\chi_{top}(S).\end{aligned}$$ using $N_{S/Y}\simeq \Omega_S$ and where $i:S\hookrightarrow Y$.\\
\indent According to Lemma \ref{lem_dim_transverse}, if there is no common curve to the surfaces in $\mathcal H(Y)$, the intersection of two general surfaces is either empty, in which case $0=[S_1]\cdot [S_2]=\chi_{top}(S)$ or $0$-dimensional $0<[S_1]\cdot [S_2]=\chi_{top}(S)$.
\end{proof}

\begin{remarque} {\rm (1) As the following example shows, it is necessary that the deformations of $S$ cover $Y$\rm : Let $C\subset \Sigma$ be a smooth curve of genus $>1$ on a $K3$ surface. Then, as explained in \cite[Section 8]{igb_master_thesis}, $\mathbb P(T_{\Sigma|C})\subset \Sigma^{[2]}$ is Lagrangian and} $$\begin{aligned}\chi_{top}(\mathbb P(T_{\Sigma|C}))&= \int c_2(T_{\mathbb P(T_{\Sigma|C})})\\ &=\int (pr^*c_1(T_{\Sigma|C})+2c_1(\mathcal O_{\mathbb P(T_{\Sigma|C})}(1)))\cdot pr^*c_1(T_C)\\ &=2(2-2g(C)) <0.\end{aligned}$$ {\rm As $h^0(\Omega_{\mathbb P(T_{\Sigma|C})})=h^0(\omega_C)$ and deformations of the Lagrangian subvarieties $C\subset \Sigma$ and $\mathbb P(T_{\Sigma|C})\subset \Sigma^{[2]}$ are unobstructed, the deformations of $\mathbb P(T_{\Sigma|C})$ are induced by deformations of $C$; so the deformations of $\mathbb P(T_{\Sigma|C})$ are contained in the branched locus $E\simeq \mathbb P(T_S)$ of $q:Bl_{\Delta_{\Sigma^2}}(\Sigma^2)\rightarrow \Sigma^{[2]}$.}\\
\indent {\rm Moreover $\mathbb P(T_{\Sigma|C_1})\cap \mathbb P(T_{\Sigma|C_2})=\cup_{i=1}^{2g(C)-2}\mathbb P(T_{\Sigma,p_i})$ where $\{p_1,\dots,p_{2g(C)-2}\}=C_1\cap C_2$ and as $\mathcal O_\Sigma(C)$ is base point free (\cite[Lemma 2.3]{Huybrechts_k3}), the surfaces $\mathbb P(T_{\Sigma|C})$ have no curve in common.}\\

\indent {\rm Actually, in this case, any Lagrangian surface $S\subset E$ is of the form $\mathbb P(T_{\Sigma|C})$ for a smooth curve $C\subset \Sigma$.}\\
\indent {\rm Indeed, denoting by $\tau: Bl_{\Delta_{\Sigma^2}}(\Sigma^2)\rightarrow \Sigma^2$ the blow-up, by $\omega$ the symplectic form on $\Sigma^{[2]}$ and by $\omega_\Sigma$ the one on $\Sigma$, we have $q^*\omega =\tau^*(pr_1^*\omega_\Sigma+pr_2^*\omega_\Sigma)$. So, denoting $j_E:E\hookrightarrow Bl_{\Delta_{\Sigma^2}}(\Sigma^2)$, we get} $$\begin{aligned}j_E^*q^*\omega = j_E^*\tau^*(pr_1^*\omega_\Sigma+pr_2^*\omega_\Sigma) =\tau_{|E}^*i_{\Delta_{\Sigma^2}}(pr_1^*\omega_\Sigma+pr_2^*\omega_\Sigma) =2\tau_{|E}^*\omega_\Sigma.\end{aligned}$$ 
\indent {\rm Now, let $S\subset E$ be a Lagrangian surface. Assume $\tau_{|S}:S\rightarrow \Sigma$ is generically finite. Let $U\subset \Sigma$ be open subset over which $\tau_{|S}$ is \'etale. By the above description of the restriction of the symplectic form, $\omega_{|S}$ is non-degenerate (symplectic) on $\tau_{|S}^{-1}(U)$, in particular non-zero, thus $S$ is not Lagrangian. So $\tau_{|S}$ cannot be surjective and by semi-continuity of the dimension of the fibers of $\bar{\tau}_{|S}:S\rightarrow {\rm Im}(\tau_{|S})$, all the fibers are $1$-dimensional i.e. are fibers of $\tau_{|E}:E\rightarrow\Sigma$. Denoting $C\subset \Sigma$ its image, we get $S\simeq \mathbb P(T_{\Sigma|C})$. So $C$ is smooth.}\\
\indent {\rm In particular, any deformation of a Lagrangian surface $S$ contained in the rigid uniruled divisor $E$ stays in the latter.}\\

\indent {\rm (2) Examples of families covering $Y$ whose members have curves in common can be constructed. Let $C\subset \Sigma$ be a smooth curve of genus $\geq 2$ on a $K3$ surface and $V_p\subset |\mathcal O_{\Sigma}(C)|$ be the linear  system of curves passing through $p\in \Sigma$ (generically with multiplicity one). The curves parametrized by $V_p$ cover $\Sigma$.}\\
\indent {\rm For a general pair $([C_1],[C_2])\in V_p^2/i$, $i$ being the natural involution, $C_1$ and $C_2$ intersect transversally (in particular at $p$). As explained in \cite[Section 8]{igb_master_thesis}, $Bl_{C_1\cap C_2}(C_1\times C_2)\subset \Sigma^{[2]}$ is a Lagrangian submanifold.}\\
\indent {\rm All the smooth Lagrangian submanifolds parametrized by an open subset of  $V_p^2/i$ contain the rational curve $\mathbb P(T_{\Sigma,p})$. Moreover the universal surface over this open subset dominates $\Sigma^{[2]}$ since one can find a member of $V_p$ through any point of $\Sigma$.}\\
\indent {\rm As $\mathcal O_\Sigma(C)$ is base point free (\cite[Lemma 2.3]{Huybrechts_k3}), for the full Hilbert scheme of $Bl_{C_1\cap C_2}(C_1\times C_2)\subset \Sigma^{[2]}$, there is no curve common to every member of it. Moreover, the example does not contradict $\chi_{top}(S)\geq 0$.}
\end{remarque}

The following strenghtening of Lemma \ref{lem_dim_transverse} is natural:
\begin{question}\label{question_hilbert_sch} (General position for Hilbert scheme): Given a Hilbert scheme component $H$ parametrizing (generically) smooth (irreducible) subvarieties of a smooth projective variety $Y$ that cover it, is the intersection of two general members of $H$ dimensionally transverse?
\end{question}

\indent Now, let $S$ be a projective surface of Albanese dimension $1$. Then the Albanese morphism factors through a fibration (flat with connected fibers) $\widetilde{alb_S}:S\rightarrow B$ over a smooth curve of genus $q(S)=h^{1,0}(S)$ (\cite{beauv_cplx_surf}) such that $J(B)\simeq Alb(S)$.\\
\indent We recall that having Albanese dimension $1$ is a topological property (\cite[Section 2.2]{lopes_pardini}).\\ 
\indent Now assume $S\subset Y$ is a Lagrangian surface of a projective hyper-K\"ahler that has Albanese dimension $1$ and such that the deformations of $S$ in $Y$ cover the latter. Then we have $2\leq {\rm dim}(\mathcal H(Y))=h^{1,0}(S)=g(B)$ (using that the deformations of $S$ are unobstructed and $\Omega_S\simeq N_{S/Y}$) and any surface parametrized by $\mathcal H(Y)$ has then Albanese dimension $1$.\\
\indent Up to taking an \'etale cover of (an open subset of) $\mathcal H(Y)$ (which does not change the tangent spaces) we have 
the relative Albanese fibration $$\xymatrix{\mathcal S\ar[r]^{\widetilde{alb_{\mathcal S}}}\ar[d] &\mathcal B\ar[ld]\\ \mathcal H(Y). &}$$

\indent We have $H^0(N_{S/Y})\simeq H^0(\Omega_S)\overset{\widetilde{alb_S}^*}{\simeq} H^0(\omega_B)$ so that the general section $\sigma_S=\widetilde{alb_S}^*\sigma_B$ of $\Omega_S$ vanishes along a disjoint union $Z_{\sigma_S}=\cup_{i=1}^{2g(B)-2}F_i=[\widetilde{alb_S}^{-1}(div(\sigma_B))]$ of smooth fibers of $\widetilde{alb_S}$.\\
\indent One can think of the zero locus of a section of $N_{S/Y}$ as an ``infinitesimal intersection'' of $S$ with one of its first order deformation. So here this infinitesimal intersection is not dimensionally transverse. Theorem \ref{thm_alb_max} will follow from Lemma \ref{lem_dim_transverse} after we have proven that this infinitesimal picture can be integrated to a dimensionally non-transverse actual intersection.\\
\indent More precisely, we will prove that the deformations of $S$ fixing $Z=Z_{\sigma_S}$ are unobstructed.\\
\indent Let us first prove that the fact that such $\mathcal O_S(Z)$ comes from the base of the fibration is an open property.
\begin{lemme}\label{lem_from_base} Let $S'\subset Y$ be a deformation of $S$ fixing $Z$. Then $\mathcal O_{S'}(Z)$ can be written $\widetilde{alb_{S'}}^*\mathcal L$ for a degree $2g(B')-2$ line bundle 
\end{lemme}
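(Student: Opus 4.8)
The plan is to prove the stronger statement that $Z$ is a \emph{disjoint union of entire fibres} of $\widetilde{alb_{S'}}$; the asserted identity $\mathcal O_{S'}(Z)=\widetilde{alb_{S'}}^*\mathcal L$, with $\deg\mathcal L=2g(B')-2$, is then immediate. I would work over the connected component through $[S]$ of the locus $W\subset \mathcal H(Y)$ parametrizing the deformations of $S$ fixing $Z$, passing to the \'etale cover on which the relative Albanese fibration $\widetilde{alb}\colon \mathcal S_W\to \mathcal B_W\to W$ is defined, and I denote by $\rho\colon \mathcal S_W\to W$ the family projection. Since the deformation fixes $Z$, the latter sits as a constant relative divisor $Z\times W\subset \mathcal S_W$, flat over $W$, and at the central point $S_0=S$ one has $Z=\bigsqcup_{i=1}^{2g(B)-2}F_i$ with each $F_i$ a smooth, hence irreducible, fibre of $\widetilde{alb_S}$.

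First I would control the intersection of $Z$ with the moving fibre. For $t\in W$ the number $Z\cdot F'_t=\deg\bigl(\mathcal O_{S_t}(Z)|_{F'_t}\bigr)$, where $F'_t$ is a general fibre of $\widetilde{alb_{S_t}}$, is locally constant, being the degree of the restriction of a fixed line bundle to the members of a flat family of curves. At $t=0$ the divisor $Z$ is a union of fibres, so a general fibre $F=F'_0$ is disjoint from it and $Z\cdot F=0$; hence $Z\cdot F'_t=0$ throughout $W$. Writing $Z\cdot F'_t=\sum_i F_i\cdot F'_t$ as a sum of non-negative integers (each $F_i$ is effective and $F'_t$ is a general, in particular nef, fibre) forces $F_i\cdot F'_t=0$ for every $i$. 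Thus no component $F_i$ dominates $B'_t$, and being irreducible each $F_i$ is contracted by $\widetilde{alb_{S_t}}$ to a point $b_i$, i.e. $F_i\subseteq \Phi_i:=\widetilde{alb_{S_t}}^{-1}(b_i)$.

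It then remains to promote the inclusion $F_i\subseteq \Phi_i$ to an equality of divisors, and this is the step I expect to be the main obstacle. I would obtain it from numerical rigidity: the class $[F_i]\in H^2(S_t,\mathbb Z)$ is a flat section of the local system $R^2\rho_*\mathbb Z$, since the curve $F_i$ is fixed in the family, and so is the fibre class $[F'_t]$ of the relative fibration; the two flat sections agree at $t=0$, where $F_i$ is a fibre, so $[F_i]=[F'_t]=[\Phi_i]$ for all $t\in W$ by connectedness of $W$. Consequently $\Phi_i-F_i$ is an effective divisor with vanishing cohomology class, whence $\Phi_i=F_i$; in particular each $\Phi_i$ is irreducible and reduced, and $Z=\bigsqcup_i \Phi_i$ is a disjoint union of entire fibres of $\widetilde{alb_{S_t}}$. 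Therefore $\mathcal O_{S_t}(Z)=\widetilde{alb_{S_t}}^*\mathcal O_{B'_t}(b_1+\dots+b_{2g(B)-2})$, and since $g(B'_t)=q(S_t)=g(B)$ is constant in the smooth family the resulting $\mathcal L$ has degree $2g(B'_t)-2$, as claimed.

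The two delicate points, which I would treat carefully, are the connectedness of the parameter space of deformations fixing $Z$ (so that the two flat classes may legitimately be compared away from the base point $[S]$) and the local constancy of the intersection number $Z\cdot F'_t$; together these realize the openness of the property ``$\mathcal O_{S'}(Z)$ comes from the base'' announced before the statement, the property holding at $[S]$ and propagating over all of $W$.
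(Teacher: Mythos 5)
Your proof is correct, and while it rests on the same underlying input as the paper's --- deformation invariance of integral $(1,1)$-classes along the connected family fixing $Z$ --- the mechanism is genuinely different. The paper works entirely on the single surface $S'$: topological invariance of the Chern class gives $[\widetilde{alb_{S'}}^*\kappa_{B'}]=[Z]$ in $NS(S')$, hence $\widetilde{alb_{S'}}^*\kappa_{B'}=Z+\ell$ with $\ell\in{\rm Pic}^0(S')$; intersecting with $\widetilde{alb_{S'}}^*\kappa_{B'}$, whose self-intersection vanishes and which pairs to zero with $\ell$, yields $\int_{S'}Z\cdot\widetilde{alb_{S'}}^*\kappa_{B'}=0$, and ampleness of $\omega_{B'}$ (this is where $g(B')\geq 2$ enters) contracts $Z$; the components of $Z$ are then promoted from components of fibres to entire fibres by the invariance of the $H$-degree of the Albanese fibres, $H$ ample on $Y$, and flatness of $\widetilde{alb_{S'}}$ concludes. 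You instead run the invariance componentwise in the local system $R^2\rho_*\mathbb Z$ over $W$: the classes $[F_i]$ and the relative fibre class are flat sections agreeing at $t=0$, which gives you both the contraction of $Z$ (via the constancy of $Z\cdot F'_t$) and, more directly, the equality $F_i=\Phi_i$ from effectivity of $\Phi_i-F_i$ together with the vanishing of its class against an ample divisor --- this replaces the paper's $H$-degree argument, avoids any appeal to ampleness of $\kappa_{B'}$, and yields for free that the fibres through $Z$ are reduced and irreducible. The two ``delicate points'' you flag (connectedness of the locus fixing $Z$, and local constancy of the intersection numbers) are genuine but standard, and it is worth noting that they are equally implicit in the paper's one-line ``having trivial Chern class is topological'': that transfer also presupposes flat transport of $[Z]$ and of $[\widetilde{alb}^*\kappa]$ along a connected family of deformations fixing $Z$, so your version makes explicit a hypothesis the paper uses silently.
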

\begin{proof}
As $\widetilde{alb_S}^*\omega_B\simeq \mathcal O_S(Z)$ and having trivial Chern class is topological, $[\widetilde{alb_{S'}}^*\kappa_{B'}]=[Z]$ in $NS(S')$. So we can write $\widetilde{alb_{S'}}^*\kappa_{B'} = Z + \ell$ in ${\rm Pic}(S')$ for some $\ell\in {\rm Pic}^0(S')$. Intersecting with $\widetilde{alb_{S'}}^*\kappa_{B'}$ we get $0=\int_{S'}\widetilde{alb_{S'}}^*\kappa_{B'}^2=\int_{S'}Z\cdot \widetilde{alb_{S'}}^*\kappa_{B'}+\int_{S'}\ell\cdot \widetilde{alb_{S'}}^*\kappa_{B'}$. As $\ell\in {\rm Pic}^0(S')$, we get $\int_{S'}Z\cdot \widetilde{alb_{S'}}^*\kappa_{B'}=0$. But as $g(B')\geq 2$, $\omega_{B'}$ is ample, so $Z$ is contracted by $\widetilde{alb_{S'}}$. Now let $H$ be a ample divisor on $Y$, the invariance of $H$-degree of the fibers of the Albanese fibrations gives that the components of $Z$ are actual fibers of $\widetilde{alb_{S'}}$ (not just components of fibers). Hence the result ($\widetilde{alb_{S'}}$ is flat).
\end{proof}
As any degree $2g-2$ line bundle $L$ of a genus $g$ curve $B$ is either $\omega_B$ or satisfies $h^0(L)=g-1$,  the difference between $\mathcal O_{S'}(Z)$ and $\widetilde{alb_{S'}}^*\omega_B$ can be detected by their respective number of linearly independent sections. So we will prove that all the sections of $\mathcal O_S(Z)\simeq \widetilde{alb_S}^*\omega_B$ deform.\\

\indent Let us recall some results of deformation theory. They can be found in \cite{lehn} and \cite{sernesi} for example.\\
\indent Set the deformation functor 
$$\begin{tabular}{llll}
$H_S^{Z,Y}:$ &$\mathcal A$ &$\rightarrow$ &$Sets$\\
$ $ &$A$ &$\mapsto$ &$\{Z\times {\rm Spec}(A)\subset \mathcal S_A\subset Y\times {\rm Spec}(A),\ \mathcal S_A\ {\rm flat\ over}\ A\ {\rm and}\ \mathcal S_A\otimes k\simeq S\}$
\end{tabular}$$
where $\mathcal A$ is the category of local artinian $\mathbb C$-algebra with resiude field $\mathbb C$.\\
\indent By cocycle computations, we get the following proposition (which is essentially \cite[Lemma I.4.3]{lehn}).

\begin{proposition}\label{prop_def_funct_representable} The functor $H_S^{Z,Y}$ is pro representable (subfunctor of the local Hilbert functor) with tangent space $H^0(N_{S/Y}(-Z))$ and obstruction space $H^1(N_{S/Y}(-Z))$.
\end{proposition}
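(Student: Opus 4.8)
The plan is to realize $H_S^{Z,Y}$ as the subfunctor of the unconstrained local Hilbert functor $\mathrm{Hilb}_{S/Y}$ consisting of those embedded deformations that contain the fixed subscheme $Z$, and then to transport the classical cocycle computation to the twisted normal sheaf $N_{S/Y}(-Z)$. First I would recall the classical picture, which is exactly \cite[Lemma I.4.3]{lehn}: the local Hilbert functor $\mathrm{Hilb}_{S/Y}$ is pro-representable with tangent space $H^0(N_{S/Y})$ and obstruction space $H^1(N_{S/Y})$. Concretely, over $\mathbb{C}[\epsilon]$ an embedded deformation is obtained by choosing, on an affine cover $\{U_i\}$ of $S$, local lifts of the equations cutting out $S$ in $Y$; the differences of these lifts on overlaps assemble into a Čech $1$-cocycle with values in $N_{S/Y}$, and the obstruction to extending a deformation over a small extension is measured by the corresponding class in $H^1(N_{S/Y})$. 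Pro-representability is then obtained from Schlessinger's criteria (see also \cite{sernesi}).

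Next I would impose the incidence condition $Z\times \mathrm{Spec}(A)\subset \mathcal{S}_A$. Since $Z=\cup_i F_i$ is a disjoint union of smooth fibers of $\widetilde{alb_S}$, it is a reduced smooth, hence Cartier, divisor in the surface $S$. Requiring that the constant family $Z\times\mathrm{Spec}(A)$ be contained in the deformation forces the local lifts of the equations of $S$ to coincide with those of $S$ along $Z$ to the relevant order; equivalently, the normal section describing the deformation must vanish along $Z$. Sections of $N_{S/Y}$ vanishing along the Cartier divisor $Z$ are exactly the sections of $N_{S/Y}\otimes\mathcal{O}_S(-Z)=N_{S/Y}(-Z)$, which yields the tangent space $H^0(N_{S/Y}(-Z))$. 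Carrying out the same cocycle computation with the lifts now constrained to fix $Z$, the gluing and extension obstructions assemble into a Čech $1$-cocycle valued in the twisted sheaf $N_{S/Y}(-Z)$, giving the obstruction space $H^1(N_{S/Y}(-Z))$.

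For pro-representability of $H_S^{Z,Y}$ itself, I would argue that it is the subfunctor of the pro-representable functor $\mathrm{Hilb}_{S/Y}$ cut out by the closed condition of containing the fixed $Z$, and that Schlessinger's conditions (H1)–(H3) are inherited by such a subfunctor; equivalently, $H_S^{Z,Y}$ is the completion at $[S]$ of the fiber over $[Z]$ of the forgetful morphism from the flag Hilbert scheme of pairs $Z\subset S$ to the Hilbert scheme of $Z$, which exhibits it as a subfunctor of the local Hilbert functor as claimed.

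The step I expect to require the most care is the identification of the incidence condition ``contains $Z$'' with vanishing of the normal section along $Z$ at the level of the twist $N_{S/Y}(-Z)$: one must check that the constraint is exactly the vanishing cut out by the ideal $\mathcal{O}_S(-Z)$, and that the obstruction cocycles genuinely take values in $N_{S/Y}(-Z)$ rather than merely mapping into $N_{S/Y}$. Here the hypothesis that $Z$ is a smooth Cartier divisor is what makes the twist clean and the computation formally parallel to the unconstrained case.
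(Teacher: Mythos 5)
Your proposal is correct and takes essentially the same route as the paper, which gives no detailed proof but asserts the proposition ``by cocycle computations'' as essentially \cite[Lemma I.4.3]{lehn}: your constrained-lift \v{C}ech argument, with the incidence condition $Z\times{\rm Spec}(A)\subset\mathcal S_A$ translated into normal sections with values in $\mathcal I_{Z/S}\simeq\mathcal O_S(-Z)$, is precisely the identification $H^0(N_{S/Y}(-Z))\simeq {\rm Hom}_{\mathcal O_S}(\mathcal I_{S/Y}/\mathcal I_{S/Y}^2,\mathcal I_{Z/Y}/\mathcal I_{S/Y})$ that the paper itself invokes in the proof of Theorem \ref{thm_alb_max}. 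Your pro-representability argument (subfunctor of the local Hilbert functor inheriting Schlessinger's conditions, equivalently the fiber of the flag Hilbert scheme over $[Z]$) is likewise exactly the parenthetical claim in the statement.
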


We will prove that $H_S^{Z,Y}$ is unobstructed i.e. that any infinitesimal deformation can be extended to an actual deformation. Letting $A_n=\mathbb C[t]/(t^{n+1})$, we have the following criterion:
\begin{proposition}\label{prop_unobstruct_criterion}(\cite[Corollary I.1.7]{lehn}) The functor $H_S^{Z,Y}$ is unobstructed if and only if for any $n\geq 0$ the natural map $H_S^{Z,Y}(A_{n+1})\rightarrow H_S^{Z,Y}(A_n)$ (induced by $A_{n+1}\twoheadrightarrow A_n$) is surjective.
\end{proposition}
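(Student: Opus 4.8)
The plan is to translate the statement into commutative algebra using the pro-representability established in Proposition~\ref{prop_def_funct_representable}. Let $R$ denote the complete local $\mathbb C$-algebra pro-representing $H_S^{Z,Y}$; since its tangent space $H^0(N_{S/Y}(-Z))$ is finite-dimensional, $R$ is Noetherian and admits a presentation $R=\mathbb C[[x_1,\dots,x_d]]/I$ with $d=\dim_{\mathbb C}H^0(N_{S/Y}(-Z))$ and $I\subseteq\mathfrak m^2$, where $\mathfrak m=(x_1,\dots,x_d)$. By definition $H_S^{Z,Y}$ is unobstructed exactly when $R$ is a power series ring, i.e.\ when $I=0$. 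Pro-representability gives $H_S^{Z,Y}(A)=\mathrm{Hom}(R,A)$ for every $A\in\mathcal A$ (continuity being automatic for artinian targets); in particular $H_S^{Z,Y}(A_n)=\mathrm{Hom}(R,\mathbb C[t]/(t^{n+1}))$ is the set of $n$-jets of formal curves on $\mathrm{Spec}(R)$ through the closed point, and the map in the statement is the truncation of $(n+1)$-jets to $n$-jets.

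The forward implication is the easy one. If $I=0$ then $\mathrm{Hom}(\mathbb C[[x_1,\dots,x_d]],A)$ is identified with $\mathfrak m_A^{\oplus d}$; since the surjection $A_{n+1}\twoheadrightarrow A_n$ induces a surjection $\mathfrak m_{A_{n+1}}\to\mathfrak m_{A_n}$, every $n$-jet extends to an $(n+1)$-jet. (Conceptually this is just the formal smoothness of $R$ over $\mathbb C$ tested against these particular surjections.)

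For the reverse implication I would argue by contraposition, producing a single jet that fails to lift. Assume $I\neq0$ and pick $0\neq f\in I$ of minimal order $m:=\mathrm{ord}(f)\geq2$, with nonzero leading form $f_m$ of degree $m$. As $f_m$ is a nonzero homogeneous polynomial, choose $v=(v_1,\dots,v_d)\in\mathbb C^d$ with $f_m(v)\neq0$. The linear jet $x_i\mapsto v_i t$ defines an element of $H_S^{Z,Y}(A_{m-1})$, because every $g\in I$ has order $\geq m$ and hence $g(v_1t,\dots,v_dt)\equiv0\pmod{t^{m}}$. Any lift of this jet to $H_S^{Z,Y}(A_m)$ must have the form $x_i\mapsto v_it+u_it^{m}$, and a degree count shows $f(v_1t+u_1t^m,\dots,v_dt+u_dt^m)\equiv f_m(v)\,t^m\pmod{t^{m+1}}$ regardless of the $u_i$ (the corrections from the $u_it^m$ terms and from the higher forms $f_{m+1},f_{m+2},\dots$ all have order $\geq m+1$ once $m\geq2$). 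Since $f_m(v)\neq0$, no such lift factors through $R$, so the truncation $H_S^{Z,Y}(A_m)\to H_S^{Z,Y}(A_{m-1})$ (the case $n=m-1\geq0$) is not surjective --- contradicting the hypothesis. Hence $I=0$ and $H_S^{Z,Y}$ is unobstructed.

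The one delicate point, and the step I expect to require care, is the degree bookkeeping in the last computation: one must verify that the free parameters $u_i$ available in a lift cannot cancel the leading contribution $f_m(v)\,t^m$. This is precisely where the minimality of $m$ does the work, since it simultaneously guarantees that the test jet $x_i\mapsto v_it$ already lies in $H_S^{Z,Y}(A_{m-1})$ and forces every correction term to land in degree $\geq m+1$. I would also note that the argument uses only pro-representability and no explicit cocycle description of the obstruction, so it is insensitive to the geometric meaning of $H_S^{Z,Y}$ and reproves \cite[Corollary I.1.7]{lehn} directly in the present situation.
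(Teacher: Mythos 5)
Your argument is correct, but note that the paper itself offers no proof of this proposition: it is imported verbatim from Lehn's thesis (\cite[Corollary I.1.7]{lehn}), so there is no in-paper argument to match. What you give is a clean, self-contained proof of the cited result: using the pro-representability from Proposition \ref{prop_def_funct_representable}, you pass to a minimal presentation $R=\mathbb C[[x_1,\dots,x_d]]/I$ with $I\subseteq\mathfrak m^2$ and show that curvilinear liftability forces $I=0$, the key step being the jet $x_i\mapsto v_it$ built from an element $f\in I$ of minimal order $m$ with $f_m(v)\neq 0$. Your degree bookkeeping is right: any lift to $A_m$ has the form $x_i\mapsto v_it+u_it^m$, the corrections contribute in degree $\geq 2m-1\geq m+1$ because $m\geq 2$ (this is where $I\subseteq\mathfrak m^2$ enters), and the higher forms $f_k$, $k\geq m+1$, contribute in degree $\geq m+1$, so $f\mapsto f_m(v)t^m\neq 0$ and no lift factors through $R$; minimality of $m$ also guarantees the test jet kills all of $I$ in $A_{m-1}$, so it genuinely lies in $H_S^{Z,Y}(A_{m-1})$. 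This buys something the bare citation does not: the argument works for any pro-representable functor on $\mathcal A$ with finite-dimensional tangent space and is insensitive to the geometry of $H_S^{Z,Y}$. The only point I would tighten is your opening sentence ``by definition $H_S^{Z,Y}$ is unobstructed exactly when $R$ is a power series ring'': the paper glosses unobstructedness as surjectivity of $H_S^{Z,Y}(A')\to H_S^{Z,Y}(A)$ for all small surjections $A'\twoheadrightarrow A$ in $\mathcal A$ (``any infinitesimal deformation extends''), so you should either invoke the standard equivalence of that condition with formal smoothness of $R$, or simply observe that your forward-direction lifting argument --- lifting the images of the coordinates $x_i$ through $\mathfrak m_{A'}\twoheadrightarrow\mathfrak m_A$ --- works verbatim for an arbitrary surjection in $\mathcal A$, not just for $A_{n+1}\twoheadrightarrow A_n$, which closes the loop with no extra work.
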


We will use the $T^1$-lifting principle (see \cite[section VI.3.6]{lehn}) to prove that the maps $H_S^{Z,Y}(A_{n+1})\rightarrow H_S^{Z,Y}(A_n)$ are surjective.\\
\indent Let us introduce $D_n=A_n[\epsilon]/(\epsilon^2)$, $C_n=D_n/(\epsilon t^n)$. There are projections $C_{n+1}\rightarrow D_n$, $D_n\rightarrow C_n\rightarrow A_n$. Let us also introduce the homomorphism of $\mathbb C$-algebras $\delta:A_{n+1}\rightarrow D_n$, $t\mapsto t+\epsilon$. It is injective. Likewise, let us define $\delta':A_n\rightarrow C_n$, $t\mapsto t+\epsilon$.\\
\indent Given a $[S_n]\in H_S^{Z,Y}(A_n)$, we denote $H_Z^{Z,Y}(D_n)_{S_n}$ the fiber of $H_S^{Z,Y}(D_n)\rightarrow H_S^{Z,Y}(A_n)$ over $[S_n]$.\\
\indent The $T^1$-lifting principle consists of the following
\begin{proposition}\label{prop_statement_t_1_lifting}(\cite[Lemmas VI.3.7, VI.3.8]{lehn}) For a given $n\geq 0$, if for any $[S_{n+1}]\in H_S^{Z,Y}(A_{n+1})$, the map $H_S^{Z,Y}(D_{n+1})_{S_{n+1}}\rightarrow H_S^{Z,Y}(D_n)_{S_n}$, where $S_n=S_{n+1|A_n}$, is surjective then $H_S^{Z,Y}(D_{n+1})\rightarrow H_S^{Z,Y}(C_{n+1})$ and $H_S^{Z,Y}(A_{n+2})\rightarrow H_S^{Z,Y}(A_{n+1})$ are surjective.
\end{proposition}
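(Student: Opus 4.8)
The plan is to turn the statement into pure commutative algebra and then transport it through $H_S^{Z,Y}$ using its pro-representability (Proposition \ref{prop_def_funct_representable}). The point is that a pro-representable functor $H_S^{Z,Y}\simeq \mathrm{Hom}(R,-)$ preserves fiber products of Artinian algebras, so it carries a cartesian square of rings (one edge surjective) to a cartesian square of sets. Everything will then reduce to two ring-theoretic identities and a diagram chase feeding on the hypothesis.

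First I would record the two identities, each checked by comparing $\mathbb{C}$-bases. Using the truncation maps and $t\mapsto t+\epsilon$ one has the cartesian squares
$$ C_{n+1}\simeq A_{n+1}\times_{A_n} D_n \qquad\text{and}\qquad A_{n+2}\simeq A_{n+1}\times_{C_{n+1}} D_{n+1}, $$
the first formed from the projection $A_{n+1}\to A_n$ and the projection $C_{n+1}\to D_n$ noted above, the second from the projection $A_{n+2}\to A_{n+1}$ and $\delta\colon A_{n+2}\to D_{n+1}$, $t\mapsto t+\epsilon$, with bottom edge $\delta'\colon A_{n+1}\to C_{n+1}$, $t\mapsto t+\epsilon$, and right edge the quotient $p\colon D_{n+1}\to C_{n+1}$ (recall $C_{n+1}=D_{n+1}/(\epsilon t^{n+1})$). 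In each case the natural map from the left-hand ring into the fiber product is injective (intersect the two kernels; for the second, use that $t\mapsto t+\epsilon$ is injective), and a dimension count then forces it to be an isomorphism.

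Next, applying $H_S^{Z,Y}$ and pro-representability to the first square gives $H_S^{Z,Y}(C_{n+1})\simeq H_S^{Z,Y}(A_{n+1})\times_{H_S^{Z,Y}(A_n)} H_S^{Z,Y}(D_n)$, so an element of $H_S^{Z,Y}(C_{n+1})$ is the same datum as a pair $([S_{n+1}],[\xi_n])$ with $[\xi_n]\in H_S^{Z,Y}(D_n)_{S_n}$ and $S_n=S_{n+1|A_n}$. Given such a pair the hypothesis produces $[\eta]\in H_S^{Z,Y}(D_{n+1})_{S_{n+1}}$ restricting to $[\xi_n]$ along $D_{n+1}\to D_n$; chasing the identification (the composites $D_{n+1}\to C_{n+1}\to A_{n+1}$ and $D_{n+1}\to C_{n+1}\to D_n$ are the evident restrictions) shows $p_*[\eta]$ equals the prescribed element, whence $H_S^{Z,Y}(D_{n+1})\to H_S^{Z,Y}(C_{n+1})$ is surjective. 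Finally the second square yields $H_S^{Z,Y}(A_{n+2})\simeq H_S^{Z,Y}(A_{n+1})\times_{H_S^{Z,Y}(C_{n+1})} H_S^{Z,Y}(D_{n+1})$, under which the restriction $H_S^{Z,Y}(A_{n+2})\to H_S^{Z,Y}(A_{n+1})$ is the first projection; since $\delta'_*[S_{n+1}]$ now lifts along the surjection $p_*$ just established, every $[S_{n+1}]$ lies in the image, which is the second assertion.

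The main obstacle is the bookkeeping behind the two cartesian squares together with the precise functorial input: one must ensure that pro-representability gives genuine (bijective) fiber products of sets and not merely surjections, since it is exactly the \emph{injectivity} of $H_S^{Z,Y}(C_{n+1})\to H_S^{Z,Y}(A_{n+1})\times_{H_S^{Z,Y}(A_n)}H_S^{Z,Y}(D_n)$ that lets one identify $p_*[\eta]$ with the prescribed element of $H_S^{Z,Y}(C_{n+1})$. Once this is in place, the rest is a routine diagram chase.
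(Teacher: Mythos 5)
Your proposal is correct and follows essentially the same route as the paper, which defers the proofs to Lehn's Lemmas VI.3.7--3.8 but displays exactly your second ingredient: the morphism of small extensions $0\to\mathbb C\cdot[t^{n+2}]\to A_{n+2}\to A_{n+1}\to 0$ and $0\to\mathbb C\cdot[\epsilon t^{n+1}]\to D_{n+1}\to C_{n+1}\to 0$ with isomorphic kernels, which is equivalent to your cartesian square $A_{n+2}\simeq A_{n+1}\times_{C_{n+1}}D_{n+1}$. Your identifications $C_{n+1}\simeq A_{n+1}\times_{A_n}D_n$ and the use of pro-representability (Proposition \ref{prop_def_funct_representable}) to turn these into genuine fiber products of sets check out, so your write-up is a correct self-contained version of the cited argument.
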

\indent Actually the surjectivity of $H_S^{Z,Y}(A_{n+2})\rightarrow H_S^{Z,Y}(A_{n+1})$ is derived from the surjectivity of $H_S^{Z,Y}(D_{n+1})\rightarrow H_S^{Z,Y}(C_{n+1})$ by applying $H_S^{Z,Y}$ to the commutative diagram (\cite[Section VI.3.6]{lehn}) $$\xymatrix{0\ar[r] &\mathbb C\cdot [t^{n+2}]\ar[d]^{(n+2)\cdot}_{\cong}\ar[r] &A_{n+2}\ar[d]^{\delta}\ar[r] &A_{n+1}\ar[r]\ar[d]^{\delta'} &0\\ 0\ar[r] &\mathbb C\cdot [\epsilon t^{n+1}]\ar[r] &D_{n+1}\ar[r] &C_{n+1}\ar[r] &0.}$$

\indent We recall the following results of deformation theory that can be found in \cite{sernesi} for first order deformation. Lifting objects from $A_n$ to $D_n$ works a lot like first order deformations of the objects.

The following Lemma is essentially \cite[Lemma I.4.3]{lehn}.

\begin{lemme}\label{lem_lift_Hilb_A_n_to_B_n} For $[S_n]\in H_S^{Z,Y}(A_n)$, there is a natural $1$ to $1$ correspondence between $H_S^{Z,Y}(D_n)_{S_n}$ and $H^0(N_{S_n/Y\times {\rm Spec}(A_n)}(-(Z\times {\rm Spec}(A_n))))$. 
\end{lemme}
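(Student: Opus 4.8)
The plan is to establish a canonical bijection between the fiber $H_S^{Z,Y}(D_n)_{S_n}$ and a space of sections of the normal sheaf by reducing the lifting problem over $D_n$ to a \emph{relative} first-order deformation problem over the base $\mathrm{Spec}(A_n)$. The key point is that $D_n = A_n[\epsilon]/(\epsilon^2)$ is a square-zero extension of $A_n$ by the ideal $(\epsilon)\cong A_n$ (since $\epsilon t^n$ is \emph{not} killed in $D_n$, unlike in $C_n$), so lifting a flat subscheme $\mathcal S_n \subset Y\times\mathrm{Spec}(A_n)$ containing $Z\times\mathrm{Spec}(A_n)$ across the closed immersion $\mathrm{Spec}(A_n)\hookrightarrow\mathrm{Spec}(D_n)$ is formally identical to a first-order deformation, but carried out in the relative setting over $A_n$ rather than over the residue field $\mathbb C$.

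First I would recall the standard cocycle description: a flat family $\mathcal S_n$ is covered by affine opens on which it is cut out by local equations, and a lift to $D_n$ corresponds to a consistent choice of $\epsilon$-deformations of these equations. The obstruction to patching two local lifts on overlaps is measured by a section of the normal sheaf, and the ambiguity in a single local lift is likewise a section of the normal sheaf; globalizing, the set of lifts is a torsor under $H^0$ of the relative normal sheaf — and here, because we additionally require the subscheme $Z\times\mathrm{Spec}(A_n)$ to remain contained in every member, the relevant sheaf is the \emph{twisted down} normal sheaf $N_{\mathcal S_n/Y\times\mathrm{Spec}(A_n)}(-(Z\times\mathrm{Spec}(A_n)))$. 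The constraint that $Z$ be fixed forces the allowed deformations of the local equations to vanish along $Z$, which is exactly the passage from $N$ to $N(-Z)$.

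The correspondence is made canonical (rather than merely a torsor structure) by the presence of the trivial lift: since $\mathcal S_n$ itself, pulled back along the projection $D_n\to A_n$, gives a distinguished element of $H_S^{Z,Y}(D_n)_{S_n}$, one may use it as a basepoint to identify the torsor with the group $H^0(N_{\mathcal S_n/Y\times\mathrm{Spec}(A_n)}(-(Z\times\mathrm{Spec}(A_n))))$ itself. I would verify that this identification is natural in $[S_n]$ and compatible with the projection maps $D_{n+1}\to D_n$, since that compatibility is what Proposition \ref{prop_statement_t_1_lifting} will ultimately exploit.

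The main obstacle I expect is bookkeeping rather than conceptual: one must check that flatness over $D_n$ (not merely over $\mathbb C$) is correctly encoded by the sheaf-theoretic data, and that the square-zero ideal $(\epsilon)\subset D_n$ is genuinely free of rank one over $\mathcal O_{\mathcal S_n}$ so that the torsor is modeled on $H^0$ of the normal sheaf itself and not some quotient or extension of it. This is precisely the content of \cite[Lemma I.4.3]{lehn}; the task here is to carry the argument through \emph{relatively} over $A_n$ and with the fixed subscheme $Z$ built in, keeping track of the twist by $\mathcal O(-Z)$ at every stage.
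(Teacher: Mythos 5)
Your proposal is correct and takes essentially the same route as the paper: the paper gives no argument beyond noting that the Lemma is ``essentially'' \cite[Lemma I.4.3]{lehn} and that lifting from $A_n$ to $D_n$ works like a first-order deformation, which is exactly the relative square-zero cocycle argument you carry out. Your two refinements --- using the splitting of $D_n\rightarrow A_n$ to produce the trivial lift $\mathcal S_n\times_{{\rm Spec}(A_n)}{\rm Spec}(D_n)$ as a basepoint turning the torsor under $H^0(N_{S_n/Y\times {\rm Spec}(A_n)}(-(Z\times {\rm Spec}(A_n))))$ into a canonical bijection, and observing that fixing $Z$ forces the twist by $-Z$ on the local deformation data --- are precisely the points the citation leaves implicit.
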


\indent Following \cite{sernesi}, we introduce tools to deal with deformation of lines bundles. The proof are almost identical to those found in \textit{loc. cit.} one has just to be careful of some additional automorphisms that appear.\\
\indent Since $S$ is smooth, any infinitesimal deformation of $S$ is locally trivial (\cite[Theorem 1.2.4]{sernesi}).\\
\indent Let $\mathcal V=\{V_i\}$ be an affine open cover $S$ and $[S_n]\in H_S^{Z,Y}(A_n)$. We have $A_n$-isomorphisms $\theta_i:V_i\times {\rm Spec}(A_n)\rightarrow S_{n|V_i}$ which gives for each (ordered) pair $(i,j)$ the gluing automorphisms $\theta_{ij}=\theta_i^{-1}\theta_j:V_{ij}\times {\rm Spec}(A_n)\rightarrow V_{ij}\times {\rm Spec}(A_n)$. The proof of the following proposition follows the one of \cite[Proposition 1.2.9]{sernesi}.
\begin{proposition}\label{prop_lift_abstract_A_n_to_B_n} For $[S_n]\in H_S^{Z,Y}(A_n)$, there is a $1$ to $1$ correspondence between $$\{S_{D_n}\rightarrow {\rm Spec}(D_n),\ {\rm flat\ and\ restricting\ to}\ S_n\}/isom$$ and $H^1(T_{S_n/A_n})$.
\end{proposition}
\begin{proof} We just show one direction. Let $S_{D_n}\rightarrow {\rm Spec}(D_n)$ be an extension of $S_n\rightarrow {\rm Spec}(A_n)$. We can choose the affine cover $\mathcal V$ of $S$ so that $S_{D_n|V_i}$ and $S_{n|V_i}$ are trivial. So we have isomorphisms of deformations $$\theta_i:V_i\times {\rm Spec}(A_n)\rightarrow S_{n|V_i}\ {\rm and}\ \widetilde{\theta_i}:V_i\times {\rm Spec}(D_n)\rightarrow S_{D_n|V_i}$$ with $\widetilde{\theta_i}$ restricting to $\theta_i$ and automorphisms $$\theta_{ij}:V_{ij}\times {\rm Spec}(A_n)\rightarrow V_{ij}\times {\rm Spec}(A_n)\ {\rm and}\ \widetilde{\theta_{ij}}:V_{ij}\times {\rm Spec}(D_n)\rightarrow V_{ij}\times {\rm Spec}(D_n).$$\\
\indent Set $V_{ij}\simeq {\rm Spec}(B_{ij})$. Denoting $i_n:B_{ij}\otimes A_n\rightarrow B_{ij}\otimes D_n$ the natural inclusion, $q_n:B_{ij}\otimes D_n\rightarrow B_{ij}\otimes A_n$ the surjective $B_{ij}\otimes A_n$-algebra homomorphism and $p_n:B_{ij}\otimes D_n\rightarrow B_{ij}\otimes A_n$ the projection on the $\epsilon$ component (which is a $B_{ij}\otimes A_n$-homomorphism), we can define $\eta_{ij}=i_n\circ \theta_{ij}\circ q_n+\epsilon i_n\circ \theta_{ij}\circ p_n$. A direct calculation shows that it is an endomorphism of $D_n$-algebras.\\
\indent By assumption $\widetilde{\theta_{ij}}-\eta_{ij}$ is trivial ${\rm mod}\ \epsilon$. Writing $\epsilon \phi_{ij}=\widetilde{\theta_{ij}}-\eta_{ij}$, for $x,y\in B_{ij}\otimes D_n$, we find $\phi_{ij}(xy)=\theta_{ij}(q_n(x))\phi_{ij}(y)+\theta_{ij}(q_n(y))\phi_{ij}(x)$, which can be written $$\begin{aligned}(\theta_{ij}^{-1}\phi_{ij})(xy)= q_n(x)(\theta_{ij}^{-1}\phi_{ij})(y)+q_n(y)(\theta_{ij}^{-1}\phi_{ij})(x)=x\cdot (\theta_{ij}^{-1}\phi_{ij})(y)+ y\cdot (\theta_{ij}^{-1}\phi_{ij})(x)\end{aligned}$$ i.e. $(\theta_{ij}^{-1}\phi_{ij})\in Der_{B_{ij}\otimes D_n}(B_{ij}\otimes D_n,B_{ij}\otimes A_n)\simeq Hom_{B_{ij}\otimes D_n}(\Omega_{B_{ij}\otimes D_n/D_n}, B_{ij}\otimes A_n)\simeq Der_{B_{ij}\otimes A_n}(B_{ij}\otimes A_n,B_{ij}\otimes A_n)=\Gamma(V_{ij}\times {\rm Spec}(A_n),T_{V_{ij}\times {\rm Spec}(A_n)/{\rm Spec}(A_n)})$.\\

\indent Since on $V_{ijk}\times {\rm Spec}(D_n)$, $\widetilde{\theta_{ij}}\widetilde{\theta_{jk}}=\widetilde{\theta_{ik}}$, looking at $\phi_{ij}$ as a map from $B_{ij}\otimes A_n$ to itself, we find $$\theta_{ij}\phi_{jk}+\phi_{ij}\theta_{jk}=\phi_{ik}$$ which can be re-written $$\underbrace{\theta_{ij}\theta_{jk}}_{\theta_{ik}}(\theta^{-1}_{jk}\phi_{jk})+\theta_{ij}(\theta_{ij}^{-1}\phi_{ij})\theta_{jk}=\theta_{ik}(\theta_{ik}^{-1}\phi_{ik})$$ and pre-composing with $\theta_k^{-1}$ and post-composing with $\theta_i$, we get $$\theta_j(\theta_{ij}^{-1}\phi_{ij})\theta_j^{-1}+ \theta_k(\theta_{jk}^{-1}\phi_{jk})\theta_k^{-1}=\theta_k(\theta_{ik}^{-1}\phi_{ik})\theta_k^{-1}$$ and means that $\{\theta_j(\theta_{ij}^{-1}\phi_{ij})\theta_j^{-1}=\theta_i\phi_{ij}\theta_j^{-1}\}$ defines a \v Cech $1$-cocycle i.e. an element of $H^1(T_{S_n/A_n})$.
\end{proof}

\indent Let $d:\mathcal O_{S_n}\rightarrow \Omega_{S_n/A_n}$ be the natural $A_n$-derivation. We can define a homomorphism of sheaves of abelian groups $\mathcal O_{S_n}^*\rightarrow \Omega_{S_n/A_n}$, $a\mapsto \frac{da}{a}$. It gives rise to a group homomorphism $c:H^1(S_n,\mathcal O_{S_n}^*)\rightarrow H^1(S_n,\Omega_{S_n/A_n})$.\\
\indent As $\Omega_{S_n/A_n}$ is locally free, $H^1(S_n, \Omega_{S_n/A_n})\simeq {\rm Ext}^1(T_{S_n/A_n},\mathcal O_{S_n})$ so that to any line bundle $L_n\in Pic(S_n)$ we can associate an extension $$0\rightarrow \mathcal O_{S_n}\rightarrow \mathcal E_{L_n}\rightarrow T_{S_n/A_n}\rightarrow 0$$ defined by $c(L_n)$.\\
\indent For a line bundle $L_n$ represented by the cocycle $\{(V_{ij}\times {\rm Spec}(A_n),f_{ij})\}$ with $f_{ij}\in \Gamma(V_{ij}\times {\rm Spec}(A_n),\mathcal O_{V_{ij}\times {\rm Spec}(A_n)}^*)$, the sheaf $\mathcal E_{L_n|\theta_i(V_{i}\times {\rm Spec}(A_n))}$ is isomorphic to $(\mathcal O_{S_n}\oplus T_{S_n/A_n})_{|\theta_i(V_{i}\times {\rm Spec}(A_n))}$ and sections $(a_i,d_i)$ of $(\mathcal O_{S_n}\oplus T_{S_n/A_n})_{|\theta_i(V_{i}\times {\rm Spec}(A_n))}$ and $(a_j,d_j)$ of $(\mathcal O_{S_n}\oplus T_{S_n/A_n})_{|\theta_j(V_{j}\times {\rm Spec}(A_n))}$ are identified on $\theta_i(V_{i}\times {\rm Spec}(A_n))\cap \theta_j(V_{j}\times {\rm Spec}(A_n))$ if and only if $d_i=d_j$ and $a_j-a_i=\frac{d_i(\theta_i(f_{ij}))}{\theta_i(f_{ij})}$\\
\indent(we recall that as the cocyle relation translates into $\theta_i(f_{ij})\theta_j(f_{jk})=\theta_i(f_{ik})$ for any triple, so that, for example, $f_{ji}\theta_{ji}(f_{ij})=f_{jj}=1$).\\ 

The proof of the following theorem follows the one of \cite[Theorem 3.3.11]{sernesi}.

\begin{theoreme}\label{thm_lift_pairs_A_n_to_B_n} Let $(S_n,L_n)$ be a (projective) deformation of a pair $(S,L)$ (with $S$ smooth projective) over $A_n$. There is a $1$ to $1$ correspondence between $$\{(S_{D_n},L_{D_n}),\ {\rm lifting}\ (S_n,L_n)\ {\rm on}\ D_n\}/isom$$ and $H^1(S_n,\mathcal E_{L_n})$.\\

\indent If $L_{D_n}$ has cocycle representation $\{(V_{ij,n})\times {\rm Spec}(A_1), f_{ij}+\epsilon g_{ij})\}$, (using $D_n\simeq A_n\otimes A_1$) where $V_{ij,n}\subset S_n$ is the affine open subset isomorphic to $V_{ij}\times {\rm Spec}(A_n)$, restricting to $V_{ij}\subset S$, with $g_{ij}\in \Gamma(V_{ij,n}\times {\rm Spec}(A_1),\mathcal O_{V_{ij,n}\times {\rm Spec}(A_1)})$, and $\{(V_{ij,n}, d_{ij})\}\in H^1(T_{S_n/A_n})$ is the class of the extension $S_{D_n}$ of $S_n$, then the associated class in $H^1(\mathcal E_{L_n})$ is represented by the cocyle $\{(V_{ij,n}, (\frac{g_{ij}}{f_{ij}},d_{ij})\}$. 
\end{theoreme}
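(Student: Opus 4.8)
The plan is to mimic the proof of \cite[Theorem 3.3.11]{sernesi}, exhibiting the correspondence directly on the level of \v{C}ech cochains and then checking it is well defined modulo isomorphism. I would start from a lifting $(S_{D_n},L_{D_n})$ of the pair. By Proposition \ref{prop_lift_abstract_A_n_to_B_n} the underlying deformation $S_{D_n}$ of $S_n$ is encoded, in an affine cover $\mathcal V$ trivializing both $S_n$ and $S_{D_n}$, by the gluing automorphisms $\widetilde{\theta_{ij}}$, whose $\epsilon$-linear part yields the class $\{d_{ij}\}\in H^1(T_{S_n/A_n})$. Using the local trivializations $\widetilde{\theta_i}$ I would pull back $L_{D_n}$ to transition functions that, modulo $\epsilon^2$ and via $D_n\simeq A_n\otimes A_1$, take the form $f_{ij}+\epsilon g_{ij}$ with $\{f_{ij}\}$ representing $L_n$.

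The key computation is to expand the \v{C}ech cocycle relation satisfied by the transition functions of $L_{D_n}$ on the \emph{deformed} space $S_{D_n}$, where the overlaps are identified through the automorphisms $\widetilde{\theta_{ij}}=\theta_{ij}+\epsilon(\cdots)$ rather than through $\theta_{ij}$. Expanding $\widetilde{\theta_{ij}}^{\#}(f_{jk}+\epsilon g_{jk})$ modulo $\epsilon^2$ and isolating the $\epsilon$-linear part, the order-zero piece recovers the cocycle condition for $\{f_{ij}\}$, while the order-$\epsilon$ piece produces a relation between the $\frac{g_{ij}}{f_{ij}}$, the $d_{ij}$, and the logarithmic derivatives $\frac{d_i(\theta_i(f_{ij}))}{\theta_i(f_{ij})}$. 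I expect this relation to be exactly the assertion that $\{(V_{ij,n},(\frac{g_{ij}}{f_{ij}},d_{ij}))\}$ is a \v{C}ech $1$-cocycle for $\mathcal E_{L_n}$, once read through the gluing rule $a_j-a_i=\frac{d_i(\theta_i(f_{ij}))}{\theta_i(f_{ij})}$ recalled just before the statement; this is what pins down the announced formula for the class in $H^1(\mathcal E_{L_n})$.

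It then remains to check independence of the chosen trivializations up to coboundary, and to invert the construction. Changing the $\widetilde{\theta_i}$ alters $\{d_{ij}\}$ by a coboundary in $T_{S_n/A_n}$, and rescaling the local frames of $L_{D_n}$ by units $1+\epsilon h_i$ alters $\{\frac{g_{ij}}{f_{ij}}\}$ by the corresponding coboundary; combined, these amount precisely to a coboundary with values in $\mathcal E_{L_n}$, so the class is well defined. Conversely, from a $1$-cocycle $\{(a_{ij},d_{ij})\}$ for $\mathcal E_{L_n}$ one reconstructs the gluing automorphisms $\widetilde{\theta_{ij}}$ of $S_{D_n}$ out of the $d_{ij}$ (as in Proposition \ref{prop_lift_abstract_A_n_to_B_n}) together with the transition functions $f_{ij}(1+\epsilon a_{ij})$ of a lift $L_{D_n}$; the cocycle condition is exactly what guarantees that both glue. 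Verifying that the two assignments are mutually inverse finishes the bijection.

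The main obstacle I anticipate is the bookkeeping in the $\epsilon$-linear expansion of the \emph{twisted} cocycle relation: one must track carefully how the space-gluing automorphisms $\widetilde{\theta_{ij}}$ act on the line-bundle transition functions, since it is precisely this twisting that couples the vector-field part $d_{ij}$ to the logarithmic-derivative terms and reproduces the extension class $c(L_n)$ defining $\mathcal E_{L_n}$. As the excerpt warns, working over $A_n$ rather than over $\mathbb C$ brings in extra automorphisms, so the derivation departs from \cite[Theorem 3.3.11]{sernesi} in that these must be carried along throughout the computation, which I expect to be the only genuinely delicate point.
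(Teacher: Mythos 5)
Your proposal matches the paper's own treatment: the paper gives no independent argument but states that the proof follows \cite[Theorem 3.3.11]{sernesi} with attention to the additional gluing automorphisms over $A_n$, and your \v{C}ech-level adaptation---extracting $\{d_{ij}\}$ from the $\epsilon$-part of $\widetilde{\theta_{ij}}$, reading off $\{(\frac{g_{ij}}{f_{ij}},d_{ij})\}$ from the twisted cocycle relation, checking well-definedness via the combined $\mathcal E_{L_n}$-coboundary, and reconstructing $(S_{D_n},L_{D_n})$ from a cocycle---is exactly that argument. The one point to keep straight in the write-up is that changing the trivializations $\widetilde{\theta_i}$ perturbs both components $a_{ij}$ and $d_{ij}$ simultaneously (not each separately), but as you note the combined change is precisely a coboundary valued in the extension $\mathcal E_{L_n}$, so the argument goes through.
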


\indent Given a deformation $(S_n,L_n)$ of a pair $(S,L)$ over $A_n$, we can define a homomorphism of sheaves $$M:\mathcal E_{L_n}\rightarrow H^0(S_n,L_n)^\vee\otimes_{A_n} L_n$$ in the following way: let $\{(V_{ij}\times {\rm Spec}(A_n), f_{ij})\}$ be a cocycle representation of $L_n$. Let $V\subset S_n$ be an open set and $\eta\in \Gamma(V,\mathcal E_{L_n})$; it is given by a system $\{(a_i,d_i)\in \Gamma(V\cap \theta_i(V_{i}\times {\rm Spec}(A_n)),\mathcal O_{S_n})\times \Gamma(V\cap \theta_i(V_{i}\times {\rm Spec}(A_n)),T_{S_n/A_n})_i\}$ such that $d_i=d_j$ and $a_j-a_i=\frac{ d_i(\theta_i(f_{ij})}{\theta_i(f_{ij})}$ on $V\cap \theta_i(V_{ij}\times {\rm Spec}(A_n))$. For every section $s=\{(s_i\in \Gamma(\theta_i(V_{i}\times {\rm Spec}(A_n),\mathcal O_{S_n})_i\}\in H^0(S_n,L_n)$ set $$M(\eta)(s_i)=a_is_i+d_i(s_i).$$ 
As done in \cite[3.3.4]{sernesi}, a direct calculation, that on $V\cap \theta_i(V_{ij}\times {\rm Spec}(A_n))$, $f_{ij}\theta_j^{-1}(M(\eta)(s_j))=\theta_i^{-1}(M(\eta)(s_i))$ so that $M(\eta)(s)\in \Gamma(V,L_n)$.\\
\indent Let $\eta\in H^1(\mathcal E_{L_n})$ be given by the system $\{\theta_j(V_{ij,n}),(a_{ij},d_{ij}))\}$; $M$ induces a $A_n$-linear map $$\begin{tabular}{llll}
$M_1(\eta):$ &$H^0(L_n)$ &$\rightarrow$ &$H^1(L_n)$\\
$ $ &$(s_i)$ &$\to$ &$\overline{(a_{ij}s_i+d_{ij}(s_i))}.$
\end{tabular}$$
The proof of the following proposition follows the one of \cite[Proposition 3.3.14]{sernesi}. 
\begin{proposition}\label{prop_lift_section_A_n_to_B_n} Let $(S_n,L_n)$ be a deformation of the pair $(S,L)$ over $A_n$ and $(S_{D_n},L_{D_n})$ a lifting of $(S_n,L_n)$ to $D_n$ defined by a class $\eta\in H^1(\mathcal E_{L_n})$. Then a section $s\in H^0(L_n)$ extends to a section of $L_{D_n}$ if and only if $s\in {\rm ker}(M_1(\eta))$.
\end{proposition}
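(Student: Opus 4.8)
The plan is to follow the proof of \cite[Proposition 3.3.14]{sernesi}, comparing a candidate lift of $s$ across the overlaps of the cover and identifying the resulting obstruction $1$-cocycle with $M_1(\eta)(s)$; the only genuine addition over \emph{loc. cit.} is that here the base is itself deformed from $S_n$ to $S_{D_n}$, so the gluing automorphisms contribute the extra derivation terms $d_{ij}(s_i)$.

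First I would fix the data concretely. Using the affine cover $\mathcal V=\{V_i\}$ and the trivialisations $\theta_i$ of $S_n$, I would write a section $s\in H^0(L_n)$ as a system $(s_i)$ of local functions satisfying the gluing relation $s_i=\theta_i(f_{ij})\,s_j$ on the overlaps, where $\{(V_{ij,n},f_{ij})\}$ is the chosen cocycle for $L_n$. By Theorem \ref{thm_lift_pairs_A_n_to_B_n}, the lift $(S_{D_n},L_{D_n})$ is encoded by the cocycle $\{(V_{ij,n},f_{ij}+\epsilon g_{ij})\}$ for $L_{D_n}$ together with the class $\{(V_{ij,n},d_{ij})\}\in H^1(T_{S_n/A_n})$ describing $S_{D_n}$, and the associated class is $\eta=\{(V_{ij,n},(a_{ij},d_{ij}))\}$ with $a_{ij}=g_{ij}/f_{ij}$. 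A lift $\tilde s\in H^0(L_{D_n})$ would be a system $\tilde s_i=s_i+\epsilon\,t_i$ of local sections of $L_{D_n}$ compatible both with the transition functions $f_{ij}+\epsilon g_{ij}$ and with the gluing automorphisms $\widetilde{\theta_{ij}}$ of the deformed space $S_{D_n}$.

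Second I would expand the gluing condition to first order in $\epsilon$. Writing $\widetilde{\theta_{ij}}=\theta_{ij}+\epsilon(\,\cdot\,)$ as in the proof of Proposition \ref{prop_lift_abstract_A_n_to_B_n}, so that the $\epsilon$-part of $\widetilde{\theta_{ij}}$ acts on $s_i$ through the derivation $d_{ij}$, and substituting $\tilde s_i=s_i+\epsilon t_i$ into the relation comparing $\tilde s_i$ with the $\widetilde{\theta_{ij}}$-transported $\tilde s_j$, the coefficient of $\epsilon^0$ reproduces the gluing relation for $s$ (which holds by hypothesis), while the coefficient of $\epsilon$ yields, after dividing through by $f_{ij}$, an equation of the shape
$$t_i-\theta_i(f_{ij})\,t_j=a_{ij}s_i+d_{ij}(s_i)\qquad\text{on }V_{ij,n}.$$
In \v{C}ech terms for the sheaf $L_n$, this says that the $1$-cochain $\{a_{ij}s_i+d_{ij}(s_i)\}$ is the coboundary of the $0$-cochain $(t_i)$. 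Before concluding I would check, using the cocycle relation satisfied by $\eta$ in $H^1(\mathcal E_{L_n})$ on the triple overlaps $V_{ijk}$ together with the gluing relations for $s$, that $\{a_{ij}s_i+d_{ij}(s_i)\}$ is indeed a $1$-cocycle valued in $L_n$; by construction its class is exactly $M_1(\eta)(s)$.

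Finally, a solution $(t_i)$ of the displayed system exists if and only if the $1$-cocycle $\{a_{ij}s_i+d_{ij}(s_i)\}$ is a coboundary, i.e. if and only if $M_1(\eta)(s)=0$ in $H^1(L_n)$, giving the desired equivalence $s\in\ker(M_1(\eta))$. The step I expect to require the most care is the bookkeeping in the third paragraph: one must track the gluing automorphisms $\widetilde{\theta_{ij}}$ of the varying space (the ``additional automorphisms'' flagged before Proposition \ref{prop_lift_abstract_A_n_to_B_n}) and verify that their $\epsilon$-parts act on the $s_i$ precisely as $d_{ij}(s_i)$, so that the obstruction matches $M_1(\eta)$ on the nose rather than up to a sign or a spurious multiplicative correction. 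Everything else is the standard translation between ``a section lifts'' and ``an explicit \v{C}ech $1$-cocycle bounds''.
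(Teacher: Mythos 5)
Your proposal is correct and takes essentially the same approach as the paper: the paper offers no written proof beyond the declaration that it follows \cite[Proposition 3.3.14]{sernesi} (with the earlier caveat about ``additional automorphisms''), and your argument is precisely that adaptation --- expand a candidate lift $\tilde s_i=s_i+\epsilon t_i$ against the transition data $f_{ij}+\epsilon g_{ij}$ and the gluing automorphisms $\widetilde{\theta_{ij}}$, and identify the resulting obstruction cochain $\{a_{ij}s_i+d_{ij}(s_i)\}$ with the representative of $M_1(\eta)(s)$, so that a solution $(t_i)$ exists if and only if this class vanishes in $H^1(L_n)$. You also correctly isolate the one genuinely new point relative to \emph{loc. cit.}, namely that the $\epsilon$-parts of the $\widetilde{\theta_{ij}}$ act on the $s_i$ through the derivations $d_{ij}$, which is exactly the bookkeeping the paper flags before Proposition \ref{prop_lift_abstract_A_n_to_B_n}.
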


\indent Now, we can prove Theorem \ref{thm_alb_max}.\\

\begin{proof}[Proof of Theorem \ref{thm_alb_max}] Let us prove that $H_S^{Z,Y}$ is unobstruted by induction. Let $\mathcal U=\{U_i\simeq {\rm Spec}(P_i))\}$ be an affine open cover of $Y$ and for each $i$, $(x_{1,i},x_{2,i},x_{3,i})$ be a regular sequence such that $S\cap U_i\simeq {\rm Spec}(P_i/(x_{1,i},x_{2,i}))$ and $Z\cap U_i\simeq {\rm Spec}(P_i/(x_{1,i},x_{2,i},x_{3,i}))$.\\
\indent The line bundle $\mathcal O_S(Z)$ has the following cocycle representation $\{(U_{ij}\cap S,\frac{\overline{x_{3,i}}}{\overline{x_{3,j}}})\}$.\\
\indent We have $h^0(\Omega_S(-Z))=h^0(\Omega_S(\widetilde{alb_S}^*T_B))=1$; so let $\sigma\in H^0(\Omega_S(-Z))\simeq H^0(N_{S/Y}(-Z))\simeq Hom_{\mathcal O_S}(\mathcal I_{S/Y}/\mathcal I_{S/Y}^2,\mathcal I_{Z/Y}/\mathcal I_{S/Y})$ be a generator. It gives rise to a first order deformation $S_1\subset Y\times {\rm Spec}(A_1)$ of $S$ fixing $Z$. Looking at $\sigma_{U_i\cap S}\in Hom_{S\cap U_i}(P_i/(x_{1,i},x_{2,i})[x_{1,i}]\oplus P_i/(x_{1,i},x_{2,i})[x_{2,i}], (\overline{x_{3,i}}))$ as a couple acting by scalar product we have $\sigma_{U_i\cap S}=(x_{3,i}a_i,x_{3,i}b_i)$ for some $a_i,b_i\in P_i/(x_{1,i},x_{2,i})$ and $S_1\cap (U_i\times {\rm Spec}(A_1)\simeq {\rm Spec}(P_i\otimes A_1/(x_{1,i}+tx_{3,i}a_i,x_{2,i}+tx_{3,i}b_i))$, with $t^2=0$.\\
\indent As $(x_{1,i},x_{2,i},x_{3,i})/(x_{1,i}+tx_{3,i}a_i,x_{2,i}+tx_{3,i}b_i)\simeq (\overline{x_{3,i}})$, the Cartier divisor $Z\times {\rm Spec}(A_1)$ on $S_1$ admits the representation $\{(S_1\cap (U_i\times {\rm Spec}(A_1)), \overline{x_{3,i}})\}$. We also have the associated line bundle $\mathcal O_{S_1}(Z\times {\rm Spec}(A_1))$ on $S_1$.\\

\indent Looking at the \v Cech resolution of the exact sequence $$0\rightarrow T_S\rightarrow T_{Y|S}\rightarrow N_{S/Y}\rightarrow 0$$ by the snake lemma, we see that the class $\kappa_0(\sigma)\in H^1(T_S)$ associated to this first order deformation is represnted by a cocycle $\{(U_{ij}\cap S, \tilde d_{i|U_{ij}}-\tilde d_{j|U_{ij}})\}$, where $d_i\in Der_{\mathbb C}(P_i,P_i/(x_{1,i},x_{2,i}))$ satisfies $d_i(x_{1,i})=x_{3,i}a_i$, $d_i(x_{2,i})=x_{3,i}b_i$ and $\tilde d_i$ is the associated abstract (before pre and post composition by coordinate chart). Given those derivations, we can write an explicit trivialization of the open subset $S_1\cap (U_i\times {\rm Spec}(A_1))$ suited to Theorem \ref{thm_lift_pairs_A_n_to_B_n}: consider the commutative diagram with exact rows
$$\xymatrix{0\ar[r] &(x_{1,i},x_{2,i})\otimes A_1\ar[r]\ar[d]^{\psi_i} &P_i\otimes A_1\ar[d]^{\psi_i=1+td_i}\ar[r] &(P_i/(x_{1,i},x_{2,i}))\otimes A_1\ar[r]\ar[d]^{\overline\psi_i} &0\\
0\ar[r] &(\psi_i(x_{1,i}),\psi_i(x_{2,i})\ar[r] &P_i\otimes A_1\ar[r] &P_i\otimes A_1/(x_{1,i}+tx_{3,i}a_i,x_{2,i}+tx_{3,i}b_i)\ar[r] &0}$$
where $\overline\psi_i$ is the isomorphism induced by the automorphism (as $d_i$ is a derivation) $\psi_i$. Then under this trivialization the local equation of $Z\times {\rm Spec}(A_1)$ is $x_{3,i}-td_i(x_{3,i})$ so that the interesting cocycle representation (in view of Theorem \ref{thm_lift_pairs_A_n_to_B_n}) of $\mathcal O_{S_1}(Z\times {\rm Spec}(A_1))$ is $\{((S\cap U_{ij})\times {\rm Spec}(A_1), \frac{x_{3,i}}{x_{3,j}}(1+t(\frac{d_j(x_{3,j})}{x_{3,j}}-\frac{d_i(x_{3,i})}{x_{3,i}})))\}$.\\

\indent So the class $\eta_0(\sigma)\in H^1(\mathcal E_{\mathcal O_S(Z)})$ associated to this first order deformation of the pair $(S,\mathcal O_S(Z))$ induced by $\sigma\in H^0(N_{S/Y})$ has, according to Theorem \ref{thm_lift_pairs_A_n_to_B_n}, cocycle representation $\{(U_{ij}\cap S,(\frac{d_j(x_{3,j})}{x_{3,j}}-\frac{d_i(x_{3,i})}{x_{3,i}},\tilde d_{i|U_{ij}}-\tilde d_{j|U_{ij}}))\}$. Now, let $s\in H^0(\mathcal O_S(Z))\simeq H^0(\widetilde{alb_S}^*\omega_B)$ represented by $\{(U_i\cap S,s_i)\}$. We have $$\begin{aligned} M_1(\eta_0(\sigma))(s)&=\{(U_{ij}\cap S, \overline{\frac{d_j(x_{3,j})}{x_{3,j}}s_i-\frac{d_i(x_{3,i})}{x_{3,i}}s_i+d_i(s_i)-(\frac{x_{3,i}}{x_{3,j}}\cdot )\circ d_j\circ(\frac{x_{3,j}}{x_{3,i}}\cdot) (s_i)}\}\\ &=\{(U_{ij}\cap S, d_i(s_i)-\frac{d_i(x_{3,i})}{x_{3,i}}s_i - \frac{x_{3,i}}{x_{3,j}}(d_j(s_j) -\frac{d_j(x_{3,j})}{x_{3,j}}s_j)\}\\ &=\delta(\{(U_i\cap S,d_i(s_i)-\frac{d_i(x_{3,i})}{x_{3,i}}s_i)\})\end{aligned}$$ where $\delta$ is the \v Cech differential i.e. $M_1(\eta_0(\sigma))=0\in H^1(\mathcal O_S(Z))$. So by Proposition \ref{prop_lift_section_A_n_to_B_n}, any section $s\in H^0(\mathcal O_S(Z))$ extends to a section of $\mathcal O_{S_1}(Z\times {\rm Spec}(A_1))$. 
In particular $h^0(\mathcal O_{S_1}(Z\times {\rm Spec}(A_1)))=h^{1,0}(S)$, which by the discussion after Lemma \ref{lem_from_base}, implies $\mathcal O_{S_1}(Z\times {\rm Spec}(A_1))\simeq \widetilde{alb_{S_1}}^*\omega_{B_1/A_1}$.\\

\indent Now we go through the $T^1$-lifting principle. We have $h^0(N_{S_1/Y\times {\rm Spec}(A_1)}(-(Z\times {\rm Spec}(A_1))=h^0(\Omega_{S_1/A_1}(\widetilde{alb_{S_1/A_1}}^*T_{B_1/A_1})=1$. Choose a generator $\sigma_1\in H^0(N_{S_1/Y\times {\rm Spec}(A_1)}(-(Z\times {\rm Spec}(A_1)))$. By Lemma \ref{lem_lift_Hilb_A_n_to_B_n}, it gives rise to an extension $S_{D_1}$ of $S_1$ to $D_1$ fixing $Z$. We have $\sigma_{|U_i\cap S\times {\rm Spec}(A_1)}=(\overline{x_{3,i}}a'_i,\overline{x_{3,i}}b'_i)$ for some $a'_i,b'_i\in P_i\otimes A_1/(x_{1,i}+tx_{3,i}a_i,x_{2,i}+tx_{3,i}b_i)$ and then $S_{D_1}\cap (U_i\times {\rm Spec}(D_1))\simeq {\rm Spec}(P_i\otimes B_1/(x_{1,i}+tx_{3,i}a_i+\epsilon x_{3,i}a'_i,x_{2,i}+tx_{3,i}b_i+\epsilon x_{3,i}b'_i))$.\\
\indent Using the exact sequence $$0\rightarrow T_{S_1/A_1}\rightarrow T_{Y\times {\rm Spec}(A_1)/{\rm Spec}(A_1)|S_1}\rightarrow N_{S_1/Y\times {\rm Spec}(A_1)}\rightarrow 0$$ we see that the associated class $\kappa_1(\sigma_1)\in H^1(T_{S_1/A_1})$ is represented by $\{(S_1\cap (U_{ij}\times {\rm Spec}(A_1)), \tilde d'_{i|U_{ij}}-\tilde d'_{j|U_{ij}})\}$, where $d'_i\in Der_{A_1}(P_i\otimes A_1,P_i\otimes A_1/(x_{1,i}+tx_{3,i}a_i,x_{2,i}+tx_{3,i}b_i))$ such that $d'_i(x_{1,i}+tx_{3,i}a_i)=x_{3,i}a'_i$, $d'_i(x_{2,i}+tx_{3,i}b_i)=x_{3,i}b'_i$ and $\tilde d_i'$ is the associated abstract (before pre and post composing with trivializations) derivation.\\
\indent As $(x_{1,i},x_{2,i},x_{3,i})/(x_{1,i}+tx_{3,i}a_i+\epsilon x_{3,i}a'_i,x_{2,i}+tx_{3,i}b_i+\epsilon x_{3,i}b'_i)\simeq (\overline{x_{3,i}})$, the Cartier divisor $Z\times {\rm Spec}(D_1)$ has representation $\{(S_{D_1}\cap (U_i\times {\rm Spec}(D_1), \overline{x_{3,i}})\}$.\\
\indent Recalling the natural isomorphism $A_n\otimes A_1\simeq D_n$, we have the commutative diagram with exact rows
$$\resizebox{40em}{!}{\xymatrix@C=1em{0\ar[r] &(x_{1,i}+tx_{3,i}a_i,x_{2,i}+tx_{3,i}b_i)\otimes A_1\ar[r]\ar[d]^{\psi_i'} &P_i\otimes D_1\ar[r]\ar[d]^{\psi_i'=1+\epsilon d_i'} &(P_i\otimes A_1/(x_{1,i}+tx_{3,i}a_i,x_{2,i}+tx_{3,i}b_i))\otimes A_1\ar[r]\ar[d]^{\overline{\psi_i'}} &0 \\
0 \ar[r] &(x_{1,i}+x_{3,i}(ta_i+\epsilon a_i'),x_{2,i}+x_{3,i}(tb_i+\epsilon b_i'))\ar[r] &P_i\otimes D_1\ar[r] &P_i\otimes D_1/(x_{1,i}+x_{3,i}(ta_i+\epsilon a_i'),x_{2,i}+x_{3,i}(tb_i+\epsilon b_i'))\ar[r] &0}}$$ 
where $\widetilde{\psi_i'}$ is the isomorphism induced by the automorphism $\psi_i'$. Under this trivialization, the local equation of $Z\times {\rm Spec}(D_1)$ is $x_{3,i}-\epsilon d_i'(x_{3,i})$. So the interesting cocycle representation of $\mathcal O_{S_{D_1}}(Z\times {\rm Spec}(D_1))$ is $\{((S_1\cap (U_{ij}\times {\rm Spec}(A_1))\times {\rm Spec}(A_1), \frac{x_{3,i}}{x_{3,j}}(1+\epsilon(\frac{d_j(x_{3,j})}{x_{3,j}}-\frac{d_i(x_{3,i})}{x_{3,i}})))\}$ and the class $\eta_1(\sigma_1)\in H^1(\mathcal E_{\mathcal O_{S_1}(Z\times {\rm Spec}(A_1)})$ associated to the extension of the pair $(S_1,\mathcal O_{S_1}(Z\times {\rm Spec}(A_1)))$ to $D_1$ has the following cocycle representation $\{((S_1\cap (U_{ij}\times {\rm Spec}(A_1)), (\frac{d_j(x_{3,j})}{x_{3,j}}-\frac{d_i(x_{3,i})}{x_{3,i}}, \tilde d_i'-\tilde d_j'))\}$.\\
\indent A similar computation as above shows that $M_1(\eta_1(\sigma_1))(s)=0\in H^1(\mathcal O_{S_1}(Z\times {\rm Spec}(A_1))$ for any section $s\in H^0(\mathcal O_{S_1}(Z\times {\rm Spec}(A_1))$. So by Proposition \ref{prop_lift_section_A_n_to_B_n}, $h^0(\mathcal O_{S_1}(Z\times {\rm Spec}(A_1))=h^0(\mathcal O_{S_{D_1}}(Z\times {\rm Spec}(D_1))$ i.e. $\mathcal O_{S_{D_1}}(Z\times {\rm Spec}(D_1)\simeq \widetilde{alb_{S_{D_1}}}^*\omega_{B_{D_1}/D_1}$ by the discussion after Lemma \ref{lem_from_base}.\\

\indent The map $H_S^{Z,Y}(D_1)_{S_1}\simeq H^0(\Omega_{S_1/A_1}(-(Z\times {\rm Spec}(A_1)))\rightarrow H_S^{Z,Y}(D_0)_S\simeq H^0(\Omega_S(-Z))$ is obviously surjective. So by Proposition \ref{prop_statement_t_1_lifting}, $H_S^{Z,Y}(A_2)\rightarrow H_S^{Z,Y}(A_1)$ is also surjective i.e. there is an extension $S_2$ of $S_1$ fixing $Z$.\\
\indent As mentioned after Proposition \ref{prop_statement_t_1_lifting}, we have a commutative diagram $$\xymatrix{S_2\in H_S^{Y,Z}(A_2)\ar[d]^{H_S^{Y,Z}(\delta)}\ar[r] &H_S^{Y,Z}(A_1)\ni S_1\ar[d]^{H_S^{Y,Z}(\delta')}\\S_{D_1}\in H_S^{Y,Z}(D_1)\ar[r] &H_S^{Y,Z}(C_1).}$$
\indent Now, as $\delta:A_2\rightarrow D_1$ is injective, the image of ${\rm Spec}(D_1)\rightarrow {\rm Spec}(A_2)$ is dense. Since the component of the (relative) Picard scheme of $S$ containing $\mathcal O_S(Z)$ is proper and $\mathcal O_{S_{D_1}}(Z\times {\rm Spec}(D_1))\simeq \widetilde{alb_{S_{D_1}}}^*\omega_{B_{D_1}}$, we get $\mathcal O_{S_2}(Z\times {\rm Spec}(A_2))\simeq \widetilde{alb_{S_2}}^*\omega_{B_2}$. So again $h^0(N_{S_2/Y\times {\rm Spec}(A_2)}(-(Z\times {\rm Spec}(A_2))))=1$ and we can continue the induction.\\
\indent So a first order deformation of $S$ fixing $Z$ extends to an actual deformation of $S$ fixing $Z$. So for a general $S'$ in the Hilbert scheme component of $S$, $S\cap S'$ meet along a divisor of the linear system $|\widetilde{alb_S}^*\omega_B|$. As $\widetilde{alb_B}$ is base point free, Lemma \ref{lem_dim_transverse} gives a contradiction. 
\end{proof}

\section{Interaction with Lagrangian fibrations}
We recall the following Proposition found in \cite[Proposition 3.5]{hwang_base}:
\begin{proposition}\label{prop_hwang} Let $\pi\colon M\rightarrow B$ be a smooth Lagrangian fibration with a Lagrangian section. There is a unramified surjective holomorphic map $f:T^*B\rightarrow M$, $T^*B$ being the total space of the cotangent bundle of $B$, which commutes with the projection to $B$ and the map $\pi$. Moreover the pull-back of the symplectic form $\omega$ on $M$ by $f$ coincides with the standard symplectic structure of $T^*B$
\end{proposition}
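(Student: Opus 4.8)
The plan is to realize $M$ as a fiberwise quotient of $T^{*}B$ by building, directly from $\omega$, a natural action of the bundle of groups $T^{*}B$ on $M$ over $B$, and then to use the Lagrangian section both to fix an origin in each fiber (defining $f$) and to normalize the symplectic form.

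First I would construct the action. Given a locally defined closed $1$-form $\alpha$ on $B$, let $v_{\alpha}$ be the vector field on $M$ determined by $\iota_{v_{\alpha}}\omega=\pi^{*}\alpha$, which exists and is unique by nondegeneracy. For any $w$ tangent to a fiber one has $\omega(v_{\alpha},w)=\pi^{*}\alpha(w)=0$, so, the fibers being Lagrangian (hence $\omega$-self-orthogonal), $v_{\alpha}$ is vertical; and Cartan's formula with $d\alpha=0=d\omega$ gives $\mathcal{L}_{v_{\alpha}}\omega=d\,\iota_{v_{\alpha}}\omega=\pi^{*}d\alpha=0$, so the flow of $v_{\alpha}$ preserves both $\omega$ and the fibration. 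In local coordinates $b_{1},\dots,b_{n}$ on $B$ the vertical fields $v_{i}:=v_{db_{i}}$ are the Hamiltonian fields of the functions $\pi^{*}b_{i}$, which Poisson-commute since $\omega(v_{i},v_{j})=\iota_{v_{j}}d(\pi^{*}b_{i})=v_{j}(\pi^{*}b_{i})=0$ as $v_{j}$ is vertical; hence the $v_{i}$ commute and their joint flow is a local fiberwise $\mathbb{C}^{n}$-action preserving $\omega$. At each point $\alpha_{b}\mapsto v_{\alpha}$ is the isomorphism $T^{*}_{b}B\cong T_{F_{b}}$ induced by $\omega$, so the action is infinitesimally transitive on the compact connected fibers, hence transitive, exhibiting $F_{b}$ as $T^{*}_{b}B/\Lambda_{b}$ for a lattice $\Lambda_{b}$. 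These local actions glue to a holomorphic action of $T^{*}B$ on $M$ over $B$.

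Next I would define $f\colon T^{*}B\to M$ by $f(\alpha_{b})=\alpha_{b}\cdot s(b)$, using the Lagrangian section $s$ as origin in each fiber. By construction $f$ is holomorphic, commutes with the projections to $B$ and with $\pi$, is surjective by fiberwise transitivity, and is unramified because its differential sends the fiber directions isomorphically onto $T_{F_{b}}$ via the $\omega$-isomorphism above and the base directions via $ds$; moreover the zero section of $T^{*}B$ maps to $s$.

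It remains to compute $f^{*}\omega$, which is the crux and the point where the Lagrangian section is essential. Working in the coordinates $b_{i}$ and taking the flow times $t_{i}$ of $v_{i}$ as fiber coordinates, matched under $f$ to the cotangent coordinates $p_{i}$, one has $v_{i}=\partial/\partial t_{i}$ and $\iota_{\partial_{t_{i}}}\omega=db_{i}$. The Lagrangian condition on the fibers forces the $dt_{i}\wedge dt_{j}$ part of $\omega$ to vanish and $\iota_{\partial_{t_{i}}}\omega=db_{i}$ fixes the mixed part, giving $\omega=\sum_{i}dt_{i}\wedge db_{i}+\beta$ with $\beta$ a $2$-form pulled back from $B$, while $\mathcal{L}_{\partial_{t_{i}}}\omega=0$ shows $\beta$ depends only on $b$. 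Pulling back and using $\omega_{can}=\sum_{i}dp_{i}\wedge db_{i}$ gives $f^{*}\omega=\omega_{can}+\beta$; restricting to the zero section, on which $f=s$, yields $\beta=s^{*}\omega=0$ since $s$ is Lagrangian, whence $f^{*}\omega=\omega_{can}$. The main obstacle is organizing the global $T^{*}B$-action coherently (coordinate-independence and the gluing of the local flows); the symplectic normalization is then a short computation once the section is used to kill the base correction $\beta$.
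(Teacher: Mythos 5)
Your proof is correct, and it coincides with the proof of the result the paper merely invokes: the paper gives no argument of its own for Proposition~\ref{prop_hwang}, quoting it from \cite[Proposition 3.5]{hwang_base}, and your construction---the fiberwise $T^{*}B$-action generated by the vertical Hamiltonian fields $v_{\alpha}$ with $\iota_{v_{\alpha}}\omega=\pi^{*}\alpha$, the Lagrangian section as origin defining $f$, and the normalization $f^{*}\omega=\omega_{can}+pr^{*}\beta$ with $\beta=s^{*}\omega=0$---is exactly the standard holomorphic Arnold--Liouville argument underlying that reference. The only point worth making explicit is that you use properness (compact connected fibers) both for completeness of the flows and for transitivity of the action; this is implicit in the setting of the proposition (the smooth locus of a Lagrangian fibration of a compact hyper-K\"ahler manifold), and without it surjectivity of $f$ can fail.
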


\begin{proof}[Proof of Proposition \ref{prop_img_lagr_fibr}] Let $U\subset \mathbb P^n$ be the maximal Zariski open subset over which $\pi$ is smooth. According to the above Proposition \ref{prop_hwang}, there is a unramified surjective holomorphic morphism $f:T^*U\rightarrow \pi^{-1}(U)$, which commutes with the respective projections on $U$ and such that $f^*\omega$ is the canonical symplectic form $\omega_{can}$ on $T^*U$.\\
\indent Assume $X\cap \pi^{-1}(U)\neq \emptyset$. It is then a Lagrangian submanifold of $(\pi^{-1}(U),\omega_{|\pi^{-1}(U)})$.\\
\indent As $f$ is a local biholomorphism, $Z^\circ:=f^{-1}(X\cap \pi^{-1}(U))$ is a manifold. The same reason implies that $Z^\circ$ is Lagrangian submanifold of $(T^*U,\omega_{can})$.\\
\indent Assume $\pi(Z^\circ)$ 
 is a proper subvariety of $U$. Choose a coordinates chart $(U',(z_1,\dots,z_n))$ of $U$ centered at a point $u\in pr(Z^\circ)=\pi(X)\cap U$ which is a smooth point of $pr(Z^\circ)$ and over which $\pi_{|X}$ (thus $pr$) is smooth, so that $pr(Z^\circ)\cap U'$ is defined by $pr(Z^\circ)\cap U'=\{z_{k+1}=\cdots=z_n=0\}$.\\
\indent We recall that in this chart, $\omega_{can}=\sum_id\xi_i\wedge dz_i$ where $(z_1,\dots,z_n,\xi_1,\dots,\xi_n)$ are the cotangent coordinates associated to $(z_1,\dots,z_n)$. Let us analyze the affine manifold $Z^\circ_u\subset p^{-1}(u)\simeq T_u^*U\simeq \mathbb C^n$.\\
\indent For any $z\in Z_u^\circ$, as $pr$ (because $\pi_{|X}$ is) smooth above $u$, we have the exact sequence $$0\rightarrow T_{(u,z),pr}\rightarrow T_{z}Z^\circ\rightarrow T_upr(Z^\circ)\rightarrow 0$$ and an isomorphism $T_zZ_u^\circ\simeq T_{(u,z),pr}$.\\
\indent The space $T_upr(Z^\circ)$ is generated by $\frac{\partial}{\partial z_1},\dots,\frac{\partial}{\partial z_k}$ and any $v\in T_zZ_u^\circ$ can be written $v=\sum_ia_i\frac{\partial}{\partial \xi_i}$. As $Z^\circ$ is Lagrangian, we have $$\begin{aligned}0=\omega_{can}(v,\frac{\partial}{\partial z_i})=a_i,\ i=1,\dots, r.\end{aligned}$$ So, we have $T_zZ_u^\circ\subset Span(\frac{\partial}{\partial \xi_i},k+1\leq i\leq n)$, which gives an equality as they have the same dimension.\\
\indent The tangent space of the complex manifold $Z_u^\circ\subset\mathbb C^n$ at each point is thus  equal (as an affine space) to a fixed subspace, so $Z_u^\circ$ is a finite union of linear spaces.\\
\indent So the generic fiber of $pr_{|Z^\circ}$ is linear. Now looking at the projection $f:Z^\circ\rightarrow X\cap \pi^{-1}(U)$ we see that the general fibers of $\pi_{|X}$ are (union of) compact complex manifolds that admit a surjective (unramified) holomorphic morphism from an affine space, so they are complex tori.

\end{proof}

\begin{exemple} {\rm (1) Let $\Sigma$ be an Enriques surface and $q:S\rightarrow \Sigma$ its universal cover. Take $L\in {\rm Pic}(\Sigma)$ a line bundle giving rise to an elliptic fibration on $\Sigma\rightarrow |L|$. Then $q^*L$ also gives rise to an elliptic fibration on $S$.}\\
\indent {\rm Now, $q^*L$ induces a Lagrangian fibration on $\pi:S^{[2]}\rightarrow \mathbb P^2$. On the other hand, we have an embedding $\Sigma\hookrightarrow S^{[2]}$, $x\mapsto q^{-1}(x)$ and as $h^{2,0}(\Sigma)=0$, $\Sigma$ is a Lagrangian subvariety of $S^{[2]}$. Then $\pi_{|\Sigma}$ is the elliptic fibration of $\Sigma$ given by $L$ and $\pi(\Sigma)$ is a conic.}\\
\indent {\rm (2) Let $C\subset S$ a smooth curve which is a multisection of a $K3$ surface admitting an elliptic fibration $S\rightarrow \mathbb P^1$. Then $C^{(2)}\subset S^{[2]}$ is a Lagrangian subvariety for which $f_{|C^{(2)}}$ is a finite morphism.} 
\end{exemple}
\begin{remarque} {\rm In these examples, $\bar{\pi}_{|X}:X\rightarrow \pi(X)$ is flat.}
\end{remarque}

\section*{Acknowledgments}
I would like to thank Daniel Huybrechts for referring me to Christian Lehn's thesis (in which I found key tools to complete this work) for a proof of unobstructedness of deformations of Lagrangian subvarieties and for suggestions on a preliminary version of this work. I would like to thank also Lie Fu for useful discussions around Question \ref{question_hilbert_sch}.\\
\indent I was partially supported by Simons Investigators Award HMS, HSE University Basic Research Program, National Science Fund of Bulgaria, National Scientific Program “Excellent Research and People for the Development of European Science” (VIHREN), Project No. KP-06-DV-7.

\noindent \begin{tabular}[t]{l}
\textit{rene.mboro@polytechnique.edu}\\
UMiami Miami, HSE Moscow,\\
Institute of Mathematics and Informatics, Bulgarian Academy of Sciences,\\
Acad. G. Bonchev Str. bl. 8, 1113, Sofia, Bulgaria.
\end{tabular}\\
\end{document}